\documentclass[english]{amsart}

\usepackage{amsthm}
\usepackage{mathdots}
\usepackage{nicematrix}
\usepackage[hidelinks]{hyperref}

\makeatletter
%%%%%%%%%%%%%%%%%%%%%%%%%%%%%% Textclass specific LaTeX commands.
\numberwithin{equation}{section}
\numberwithin{figure}{section}
\theoremstyle{plain}
\newtheorem{thm}{\protect\theoremname}[section]
\theoremstyle{definition}
\newtheorem{defn}[thm]{\protect\definitionname}
\theoremstyle{plain}
\newtheorem{lem}[thm]{\protect\lemmaname}
\theoremstyle{plain}
\newtheorem{prop}[thm]{\protect\propositionname}
\theoremstyle{remark}
\newtheorem*{rem*}{\protect\remarkname}
\theoremstyle{plain}
\newtheorem{cor}[thm]{\protect\corollaryname}
\theoremstyle{remark}
\newtheorem*{acknowledgement*}{\protect\acknowledgementname}

\theoremstyle{plain}
\newtheorem{ex}[thm]
{\protect\examplename}

%%%%%%%%%%%%%%%%%%%%%%%%%%%%%% User specified LaTeX commands.
\usepackage{alex}

\usepackage{tikz-cd}

% respects manual enumerate labels 
\newcommand{\mylabel}[2]{#2\def\@currentlabel{#2}\label{#1}}

\makeatother

\usepackage{babel}
\providecommand{\acknowledgementname}{Acknowledgement}
\providecommand{\corollaryname}{Corollary}
\providecommand{\definitionname}{Definition}
\providecommand{\lemmaname}{Lemma}
\providecommand{\propositionname}{Proposition}
\providecommand{\remarkname}{Remark}
\providecommand{\theoremname}{Theorem}
\providecommand{\examplename}{Example}

\subjclass[2000]{15B36, 11C20, 11R44, 11R29}

\keywords{Matrices of integers; matrices over principal ideal domains; Dedekind domains; ideal class groups; prime ideals of degree one}

\begin{document}
\title[Representatives of similarity classes corresponding to ideal classes]{Representatives of similarity classes of matrices over PIDs corresponding
to ideal classes}

\author{Lucy Knight}
\address{Department of Mathematical Sciences, Durham University, Durham, DH1
3LE, UK}
\curraddr{Department of Mathematics, Dartmouth College, Hanover, NH 03755, USA}
\email{lucy.c.knight.gr@dartmouth.edu}

\author{Alexander Stasinski}
\address{Department of Mathematical Sciences, Durham University, Durham, DH1
3LE, UK}
\email{alexander.stasinski@durham.ac.uk}

\begin{abstract}
For a principal ideal domain $A$, the Latimer--MacDuffee correspondence
sets up a bijection between the similarity classes of matrices in
$\M_{n}(A)$ with irreducible characteristic polynomial $f(x)$ and
the ideal classes of the order $A[x]/(f(x))$. We prove that when
$A[x]/(f(x))$ is maximal (i.e., integrally closed, i.e., a Dedekind
domain), then every similarity class contains a representative that
is, in a sense, close to being a companion matrix. The first step
in the proof is to show that any similarity class corresponding to
an ideal (not necessarily prime) of degree one contains a representative
of the desired form. The second step is a previously unpublished result due to Lenstra that
implies that when $A[x]/(f(x))$ is maximal, every ideal class contains
an ideal of degree one.
\end{abstract}

\maketitle

\section{Introduction}

Latimer and MacDuffee \cite{Latimer-MacDuffee} showed that there
is a bijection between the similarity classes of matrices with integer
entries (i.e., the orbits under the action of $\GL_{n}(\Z)$ on $\M_{n}(\Z)$
by conjugation) with irreducible characteristic polynomial $f(x)\in\Z[x]$
and the ideal classes of the order $\Z[x]/(f(x))$. Another proof
of this was given by Taussky \cite{Taussky-Latimer-MacDuffee}. Later
generalisations show that the correspondence also holds when $\Z$
is replaced by an arbitrary principal ideal domain (PID); see Section~\ref{sec:The-Latimer=002013MacDuffee-correspond}.

In \cite{Ochoa-I}, Ochoa claimed that every similarity class with
characteristic polynomial $f(x)$ contains a representative of the
form
\[
C_{f}(a,z)=\begin{pNiceMatrix}[xdots/shorten=0.4cm]
0 &  1 & 0 & \Cdots  & 0\\
\vdots & \Ddots &\Ddots & \Ddots& \vdots\\
0 & \Cdots &0 &1 & 0\\
u_{n-1} & u_{n-2} & \Cdots\Cdots & u_{1} & -f(z)a^{-1}\\
a & 0 & \Cdots \Cdots & 0 & z
\end{pNiceMatrix},
\]
where $a,z\in \Z$ are such that $a$ divides $f(z)$ and the $u_i\in \Z$ are explicitly determined by $f(x)$ and $z$ (see Lemma~\ref{lem:C_f(a,z)}).
For the case of $n=2$,
\[
C_{f}(a,z)=\begin{pmatrix}
    u_1 & -f(z)a^{-1}\\ a & z
\end{pmatrix}
\]
is a general $2\times 2$ matrix over $\Z$, that is, any $2\times 2$ matrix is already of the form $C_{f}(a,z)$ for some $a,z\in \Z$, so it is trivial that every similarity class contains such a representative.
Note that since $f(x)$ is irreducible, $C_{f}(a,z)$ is $\GL_{n}(\Q)$-similar to the companion
matrix of $f(x)$ and that if $f(z)a^{-1}=\pm1$, then $C_{f}(a,z)$
is $\GL_{n}(\Z)$-similar to the companion matrix of $f(x)$. 
In general, it is often of great value to have a normal form for matrices under similarity. 
Ochoa's motivation was to give an explicit version of the Latimer--MacDuffee
correspondence and transfer the structure of the ideal class monoid
of $\Z[x]/(f(x))$ to sets of matrices of the form $C_{f}(a,z)$, up to similarity. 
Unfortunately, \cite{Ochoa-I} contains several mistakes (e.g., Theorem~I-4
is not true) and while Ochoa's follow-up paper \cite{Ochoa-II} was
an attempt to rectify some of these mistakes, this was ultimately inconclusive, as already the central Lemma I-1 in \cite{Ochoa-II} has a counter-example (see Rehm's review \cite{Rehm-review-of-Ochoa-II} of \cite{Ochoa-II}).

It seems that the approach in \cite{Ochoa-I} and \cite{Ochoa-II}
is not salvageable. On the other hand, Rehm \cite{Rehm-Ochoa} did
establish Ochoa's claim that every similarity class as above contains
a representative of the form $C_{f}(a,z)$ under the (necessary) condition
that $\Z[x]/(f(x))$ is the maximal order (i.e., the ring of integers)
of the number field $\Q[x]/(f(x))$. (Note that Rehm used conventions
that led to the transpose of $C_{f}(a,z)$.) 

The purpose of this paper is to generalise Rehm's result to the case
where $\Z$ is replaced by an arbitrary PID. Our main result is Theorem~\ref{thm:main}. The most serious obstacle
to such a generalisation is Rehm's use of the fact that every ideal
class in the ring of integers of a number field contains a prime ideal
of degree one. The classical proofs of this fact are rather non-trivial,
based either on the Chebotarev density theorem or on analytic properties
of the Dedekind zeta function, and it is not clear whether an analogous
proof exists in the more algebraic setting of an arbitrary PID. Nevertheless,
Rehm showed more generally (i.e., when $\Z[x]/(f(x))$ is not necessarily
maximal) that any similarity class corresponding to an ideal class
containing an ideal of the form $\mfq_{1}\cdots\mfq_{r}$, where $r\in\N$,
each $\mfq_{i}$ is a prime ideal of degree one and the $\mfq_{i}$
lie over pairwise distinct primes, contains a matrix $\cC_{f}(a,z)$
(see \cite[Theorem~2]{Rehm-Ochoa}; note again that Rehm used the
transposed matrix). Rehm's proof of this fact goes through without difficulty for
the order $A[x]/(f(x))$, where $A$ is an arbitrary PID and $f(x)\in A[x]$
is irreducible. It is however not clear (and perhaps not even true)
that if $A[x]/(f(x))$ is assumed to be the maximal order, hence a
Dedekind domain, then each of its ideal classes contains an ideal
of the form $\mfq_{1}\cdots\mfq_{r}$, as above.

Generalising \cite[Theorem~2]{Rehm-Ochoa}, we show that any similarity
class in $\M_{n}(A)$ corresponding to an ideal class of $A[x]/(f(x))$
containing an ideal of the form
\[
\mfq_{1}^{e_{1}}\cdots\mfq_{r}^{e_{r}},
\]
where $r\in\N\cup\{0\}$, $e_{i}\in\N$, each $\mfq_{i}$ is a prime ideal of degree
one and of ramification index one, and the $\mfq_{i}$ lie over pairwise
distinct primes, contains a matrix $\cC_{f}(a,z)$ (note that when $r=0$ we have the empty product $(1)$, which lies in the trivial ideal class). The main point
here is that, unlike Rehm, we allow higher powers of our prime ideals. We prove
this by working with ``ideals (not necessarily prime) of degree one''.
If $A$ is a Dedekind domain with field of fractions $K$, $L/K$
is a finite separable extension and $B$ is the integral closure of
$A$ in $L$, then it is standard to say that a prime ideal $\mfq$
of $B$ is of degree one if $[B/\mfq:A/(A\cap\mfq)]=1$. As a generalisation
of this we say that a proper ideal $\mfb$ of $B$ is of \emph{degree
one} if $A+\mfb=B$, that is, if the inclusion of $A/(A\cap\mfb)$
in $B/\mfb$ is an isomorphism. It turns out that ideals of the form
$\mfq_{1}^{e_{1}}\cdots\mfq_{r}^{e_{r}}$, as above, are of degree
one. In fact, we prove that any similarity class corresponding to
an ideal class containing an ideal of degree one contains a matrix
$C_{f}(a,z)$.

To prove that every similarity class contains a matrix $C_{f}(a,z)$
when $A[x]/(f(x))$ is the maximal order, it remains to show that
in this case, every ideal class contains an ideal of the form $\mfq_{1}^{e_{1}}\cdots\mfq_{r}^{e_{r}}$,
as above. This is achieved by a result of Lenstra (Theorem~\ref{thm:Lenstra}), which
is a strengthening of a previous result of Lenstra--Stevenhagen \cite[Theorem~1]{Lenstra-Stevenhagen} with
the extra condition that the prime ideals lie over pairwise
distinct primes. 
The proof of Theorem~\ref{thm:Lenstra} was previously unpublished and kindly communicated to us by Lenstra. 

\section{\label{sec:Ideals-of-degree-one}Ideals of degree one}

Let $A$ be a Dedekind domain with field of fractions $K$, let $L/K$
be a finite separable field extension of degree $n$ and let $B$
be the integral closure of $A$ in $L$. Then $B$ is a Dedekind domain
(see \cite[I.6.2]{Lorenzini}). An \emph{$A$-order in $L$} is a
subring $\cO$ of $L$ that is a finitely generated $A$-module such
that $K\cO=L$. In particular, $B$ is an $A$-order in $L$ (\cite[I.4.7]{Lorenzini}).
Moreover, every element of an $A$-order $\cO$ is integral over $A$
(see \cite[(8.6)]{Reiner-max-ord}), so $\cO\subseteq B$. Let $\theta\in B$
be such that $L=K(\theta)$; then $A[\theta]$ is an $A$-order in
$L$. 
\begin{defn}
Let $\cO$ be an $A$-order in $L$. A proper ideal $\mfb$ of $\cO$
is said to be of \emph{degree one (over $A$)} if $A+\mfb=\cO$.
\end{defn}

By the second isomorphism theorem, $(A+\mfb)/\mfb\cong A/(A\cap\mfb)$,
so $\mfb$ is of degree one if and only if the map $A/(A\cap\mfb)\rightarrow\cO/\mfb$
induced by the embedding $A\rightarrow\cO$ is an isomorphism. Thus
the notion of an ideal of degree one generalises the standard notion
of a prime ideal of degree one.
\begin{lem}
\label{lem:if z iso and ideal gen}Let $\mfb$ be an ideal of $A[\theta]$
and $\mfa=A\cap\mfb$. Suppose that there exists a $z\in A$ such
that $\theta-z\in\mfb$. Then the embedding $A\rightarrow A[\theta]$
induces an isomorphism $A/\mfa\rightarrow A[\theta]/(\theta-z,\mfa)$
and $\mfb=(\theta-z,\mfa)$.
\end{lem}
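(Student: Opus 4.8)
The plan is to use two elementary devices. First, since $x-z$ is monic, every $p\in A[x]$ can be written $p(x)=p(z)+(x-z)q(x)$ with $q\in A[x]$, so that in $A[\theta]$ one has $p(\theta)=p(z)+(\theta-z)q(\theta)$ with $p(z)\in A$. Second, writing $g\in A[x]$ for the minimal polynomial of $\theta$ over $K$ (monic, with coefficients in $A$ since $A$ is integrally closed), the surjection $A[x]\twoheadrightarrow A[\theta]$, $x\mapsto\theta$, has kernel exactly $(g)$: it contains $g$, and if $p(\theta)=0$ then division of $p$ by the monic $g$ in $A[x]$ leaves a remainder of degree $<\deg g$ vanishing at $\theta$, hence zero. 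Thus $A[\theta]\cong A[x]/(g)$. Surjectivity of the composite $A\to A[\theta]\to A[\theta]/(\theta-z,\mfa)$ is then immediate from the first device, since $p(\theta)\equiv p(z)\pmod{(\theta-z)}$; this map clearly kills $\mfa$, so it induces a surjection $A/\mfa\to A[\theta]/(\theta-z,\mfa)$, and it remains to prove injectivity and the identity $\mfb=(\theta-z,\mfa)$.

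For injectivity I must show $A\cap(\theta-z,\mfa)\subseteq\mfa$, the reverse inclusion being trivial. Here is the one step that requires an idea: I construct a ring homomorphism $\psi\colon A[\theta]\to A/\mfa$ restricting to reduction modulo $\mfa$ on $A$ and sending $\theta$ to $z+\mfa$. By the presentation $A[\theta]\cong A[x]/(g)$, such a $\psi$ exists as soon as $g(z)\in\mfa$; and indeed $g(z)=g(z)-g(\theta)\in(z-\theta)A[\theta]\subseteq\mfb$ while $g(z)\in A$, so $g(z)\in A\cap\mfb=\mfa$. Since $\psi$ visibly annihilates both $\theta-z$ and $\mfa A[\theta]$, we get $(\theta-z,\mfa)\subseteq\ker\psi$; hence any $b\in A\cap(\theta-z,\mfa)$ satisfies $b+\mfa=\psi(b)=0$, i.e.\ $b\in\mfa$. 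Combined with the surjectivity above, the induced map $A/\mfa\to A[\theta]/(\theta-z,\mfa)$ is an isomorphism.

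Finally, for $\mfb=(\theta-z,\mfa)$: the inclusion $\supseteq$ holds because $\theta-z\in\mfb$ by hypothesis and $\mfa=A\cap\mfb\subseteq\mfb$. For $\subseteq$, take $\beta\in\mfb$ and write $\beta=p(\theta)$ with $p(x)=p(z)+(x-z)q(x)$; then $(\theta-z)q(\theta)\in\mfb$, so $p(z)=\beta-(\theta-z)q(\theta)$ lies in $A\cap\mfb=\mfa$, whence $\beta=p(z)+(\theta-z)q(\theta)\in(\theta-z,\mfa)$. The only non-formal ingredient in the whole argument is the existence of $\psi$, which reduces to the observation $g(z)\in\mfa$; I anticipate no obstacle beyond that, the rest being the monic-division trick and the first isomorphism theorem.
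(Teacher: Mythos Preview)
Your proof is correct. It reaches the same conclusions as the paper's but organises the work differently in two places.

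For injectivity of $A/\mfa\to A[\theta]/(\theta-z,\mfa)$, you build an explicit retraction $\psi\colon A[\theta]\to A/\mfa$ via the presentation $A[\theta]\cong A[x]/(g)$, which requires the auxiliary observation $g(z)\in\mfa$. The paper bypasses all of this with a one-line sandwich: since $(\theta-z,\mfa)\subseteq\mfb$, one has $\mfa\subseteq A\cap(\theta-z,\mfa)\subseteq A\cap\mfb=\mfa$, forcing equality. This never touches the minimal polynomial. For the equality $\mfb=(\theta-z,\mfa)$, you argue element-wise by monic division, showing directly that any $\beta\in\mfb$ decomposes as $p(z)+(\theta-z)q(\theta)$ with $p(z)\in\mfa$. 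The paper instead notes that the same surjectivity argument, together with $A\cap\mfb=\mfa$, makes $A/\mfa\to A[\theta]/\mfb$ an isomorphism as well; the two isomorphisms then fit into a commutative triangle with the canonical surjection $A[\theta]/(\theta-z,\mfa)\twoheadrightarrow A[\theta]/\mfb$, forcing the latter to be bijective. Your route is more self-contained and computational; the paper's is shorter but leaves the second isomorphism implicit.
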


\begin{proof}
We have $\mfb\supseteq(\theta-z,\mfa)$, so 
\[
\mfa=A\cap\mfb\supseteq A\cap(\theta-z,\mfa)\supseteq\mfa
\]
and thus $\mfa=A\cap(\theta-z,\mfa)$. This implies that the map $A/\mfa\rightarrow A[\theta]/(\theta-z,\mfa)$
is injective. The map is also surjective, as for any $g(x)\in A[x]$,
we have $g(\theta)-g(z)\in(\theta-z,\mfa)$, so $g(z)+\mfa\in A/\mfa$
maps to $g(\theta)+(\theta-z,\mfa)\in A[\theta]/(\theta-z,\mfa)$.
We thus have a commutative diagram$$
\begin{tikzcd}[column sep=tiny] 
A[\theta]/(\theta-z,\mfa)\arrow{rr} & {} & A[\theta]/\mfb\\
{} & A/\mfa \arrow["\cong"]{ul}\arrow[swap,"\cong"]{ur} & {}
\end{tikzcd}
$$implying that the top surjective map must be an isomorphism, that
is, 
\[
\mfb=(\theta-z,\mfa). \qedhere
\]
\end{proof}
\begin{lem}
\label{lem:deg1-equivalence}Let $\mfb$ be a proper ideal of $A[\theta]$
and $\mfa=A\cap\mfb$. Then $\mfb$ is of degree one if and only if
there exists a $z\in A$ such that $\mfb=(\theta-z,\mfa)$. 
\end{lem}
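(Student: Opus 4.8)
The plan is to reduce both directions to Lemma~\ref{lem:if z iso and ideal gen}, which already carries out the substantive work; essentially nothing new is needed beyond a reformulation of the defining condition.

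For the implication ($\Leftarrow$), I would start from the hypothesis $\mfb=(\theta-z,\mfa)$ with $z\in A$. Then $\theta-z\in\mfb$, so for every $g(x)\in A[x]$ the element $g(\theta)-g(z)$ lies in the ideal generated by $\theta-z$, hence in $\mfb$; thus $g(\theta)\in A+\mfb$. Since every element of $A[\theta]$ is of the form $g(\theta)$, this gives $A+\mfb=A[\theta]$, and as $\mfb$ is proper by hypothesis, $\mfb$ is of degree one. (This is precisely the surjectivity computation appearing in the proof of Lemma~\ref{lem:if z iso and ideal gen}, so one may alternatively just quote that argument.)

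For the implication ($\Rightarrow$), suppose $\mfb$ is of degree one, i.e.\ $A+\mfb=A[\theta]$. In particular $\theta\in A+\mfb$, so there exists $z\in A$ with $\theta-z\in\mfb$. This is exactly the hypothesis of Lemma~\ref{lem:if z iso and ideal gen}, which then yields $\mfb=(\theta-z,\mfa)$, as desired.

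I do not expect any real obstacle here. The only thing to observe is that, for an ideal $\mfb$ of $A[\theta]$, the condition ``$\theta-z\in\mfb$ for some $z\in A$'' is the same as ``$\theta\in A+\mfb$''; combined with Lemma~\ref{lem:if z iso and ideal gen} this packages immediately into the stated equivalence. If anything, one should be mildly careful that being of degree one carries the standing assumption that $\mfb$ is proper, which is already built into the statement of the lemma.
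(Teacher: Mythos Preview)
Your proof is correct and follows essentially the same route as the paper: for $(\Rightarrow)$ you pick $z\in A$ with $\theta-z\in\mfb$ from the degree-one condition and invoke Lemma~\ref{lem:if z iso and ideal gen}, exactly as the paper does; for $(\Leftarrow)$ the paper also quotes Lemma~\ref{lem:if z iso and ideal gen} (for the isomorphism $A/\mfa\to A[\theta]/\mfb$), whereas you reproduce the surjectivity argument directly, which amounts to the same thing.
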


\begin{proof}
Assume that $\mfb$ is of degree one and choose $z\in A$ such that
$\theta\in z+\mfb$.
Since $\theta-z\in\mfb$,
Lemma~\ref{lem:if z iso and ideal gen} implies that $\mfb=(\theta-z,\mfa).$

Conversely, assume that there is a $z\in A$ such that $\mfb=(\theta-z,\mfa)$. 
By Lemma~\ref{lem:if z iso and ideal gen}
$A/\mfa\rightarrow A[\theta]/(\theta-z,\mfa)=A[\theta]/\mfb$ is an
isomorphism. Thus $\mfb$ is of degree one, as it is a proper ideal.
\end{proof}
Recall that a prime ideal $\mfq$ of $B$ has ramification index one
(over $A)$ if it appears with exponent $1$ in the prime ideal decomposition
of $(A\cap\mfq)B$ and it is called \emph{unramified} if, in addition,
the field extension $(B/\mfq)/(A/(A\cap\mfq))$ is separable (see
\cite[Chapter~I, (8.3)]{Neukirch}).
\begin{prop}
\label{prop:deg-one-characterisation}If $\mfb$ is an ideal of $B$
of degree one, then each of its prime ideal factors is of degree one.
Moreover, if $\mfq_{1},\dots,\mfq_{r}$ are prime ideals of $B$ lying
over distinct primes in $A$ (i.e., $A\cap\mfq_{i}\neq A\cap\mfq_{j}$
for $i\neq j$), and each $\mfq_{i}$ is of degree one and has ramification
index one, then $\mfq_{1}^{e_{1}}\cdots\mfq_{r}^{e_{r}}$ is of degree
one for any $e_{i}\in\N$.
\end{prop}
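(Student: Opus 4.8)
The plan is to prove the two assertions in turn; the first is formal and all the substance lies in the second. For the first, let $\mfb$ be an ideal of $B$ of degree one. Being proper, $\mfb$ is nonzero (if $\mfb=0$ then $A=B$ and there is nothing to prove), so it has a prime factorisation $\mfb=\mfq_{1}^{e_{1}}\cdots\mfq_{r}^{e_{r}}$. For each $i$ we have $\mfb\subseteq\mfq_{i}$, hence $B=A+\mfb\subseteq A+\mfq_{i}\subseteq B$, so $A+\mfq_{i}=B$; and $\mfq_{i}$, being a nonzero prime of the Dedekind domain $B$, is maximal, in particular proper. Thus each $\mfq_{i}$ is of degree one.

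For the second assertion I would first reduce to the case $r=1$. Put $\mfb=\mfq_{1}^{e_{1}}\cdots\mfq_{r}^{e_{r}}$ and $\mfp_{i}=A\cap\mfq_{i}$. The $\mfq_{i}$ are distinct nonzero primes of $B$, hence pairwise comaximal, and so are the $\mfq_{i}^{e_{i}}$, so the Chinese Remainder Theorem gives $B/\mfb\cong\prod_{i}B/\mfq_{i}^{e_{i}}$. Saying that $\mfb$ is of degree one is saying that the natural map $A\to B/\mfb$ is surjective. The kernel of $A\to B/\mfq_{i}^{e_{i}}$ is $\mfa_{i}:=A\cap\mfq_{i}^{e_{i}}$; from $\mfp_{i}\subseteq\mfq_{i}$ we get $\mfp_{i}^{e_{i}}B\subseteq\mfq_{i}^{e_{i}}$, whence $\mfp_{i}^{e_{i}}\subseteq\mfa_{i}\subseteq\mfp_{i}$, so the only maximal ideal of $A$ containing $\mfa_{i}$ is $\mfp_{i}$; as the $\mfp_{i}$ are pairwise distinct, the $\mfa_{i}$ are pairwise comaximal in $A$. (One can in fact show $\mfa_{i}=\mfp_{i}^{e_{i}}$ using $e(\mfq_{i}/\mfp_{i})=1$, but this is not needed.) Since a homomorphism from $A$ into a product $\prod_{i}M_{i}$ whose coordinate maps are surjective with pairwise comaximal kernels is itself surjective, applying this to $A\to\prod_{i}B/\mfq_{i}^{e_{i}}\cong B/\mfb$ reduces us to proving that $A+\mfq^{e}=B$ whenever $\mfq$ is a prime of $B$ of degree one over $\mfp:=A\cap\mfq$ with $e(\mfq/\mfp)=1$ and $e\geq1$.

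This single-prime-power statement I would prove by induction on $e$, the base case $e=1$ being the degree-one hypothesis. For the inductive step it suffices to show $\mfq^{e}\subseteq A+\mfq^{e+1}$, since then $B=A+\mfq^{e}\subseteq A+\mfq^{e+1}$. Here the ramification hypothesis enters: since $\mfp\neq\mfp^{2}$ (a nonzero prime of a Dedekind domain), choose $\pi\in\mfp\setminus\mfp^{2}$, so that $v_{\mfp}(\pi)=1$; as $e(\mfq/\mfp)=1$ we also have $v_{\mfq}(\pi)=1$, so $\mfq$ occurs in $\pi B$ with multiplicity exactly $1$, hence in $\pi^{e}B$ with multiplicity exactly $e$; comparing prime factorisations (using that the sum of two ideals in a Dedekind domain is their greatest common divisor, and that $\mfq^{e+1}$ contributes no other prime) we get $\pi^{e}B+\mfq^{e+1}=\mfq^{e}$. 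Finally the degree-one hypothesis in the form $B=A+\mfq$ gives $\pi^{e}B=\pi^{e}(A+\mfq)=\pi^{e}A+\pi^{e}\mfq\subseteq A+\mfq^{e+1}$, because $\pi^{e}\in A$ and $\pi\in\mfq$ (so $\pi^{e}\mfq\subseteq\mfq^{e+1}$). Hence $\mfq^{e}=\pi^{e}B+\mfq^{e+1}\subseteq A+\mfq^{e+1}$, closing the induction.

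The only nonformal step is this single-prime-power case, and its heart is that the hypothesis $e(\mfq/\mfp)=1$ supplies an element of $A$---namely $\pi$---of $\mfq$-adic valuation exactly $1$; this is exactly what forces $\mfq^{e}$ and $\pi^{e}B$ to agree modulo $\mfq^{e+1}$, letting the degree-one property pass from $\mfq$ to all its powers. When $e(\mfq/\mfp)>1$ this fails---the least positive $\mfq$-adic valuation attained on $A$ is then $e(\mfq/\mfp)$---so the ramification hypothesis is genuinely needed.
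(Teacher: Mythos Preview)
Your proof is correct and follows essentially the same architecture as the paper's: the first assertion via $\mfb\subseteq\mfq$, then a reduction of the second assertion to (i) the single prime-power case and (ii) a CRT argument combining coprime factors. The only substantive difference is in the inductive step for $\mfq^{e}$. The paper argues abstractly: it first pins down $A\cap\mfq^{e}=\mfp^{e}$, then uses the short exact sequences $0\to\mfp^{i}/\mfp^{i+1}\to A/\mfp^{i+1}\to A/\mfp^{i}\to 0$ and their $B$-analogues, observing that $\mfp^{i}/\mfp^{i+1}\to\mfq^{i}/\mfq^{i+1}$ is an injection of one-dimensional $A/\mfp$-spaces, hence an isomorphism. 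You instead pick an explicit uniformiser $\pi\in\mfp\setminus\mfp^{2}$, use $e(\mfq/\mfp)=1$ to get $v_{\mfq}(\pi)=1$, and deduce $\mfq^{e}=\pi^{e}B+\mfq^{e+1}\subseteq A+\mfq^{e+1}$ directly. Your route is a little more hands-on and avoids the exact-sequence bookkeeping; the paper's route makes the role of the graded pieces more transparent and yields $A\cap\mfq^{e}=\mfp^{e}$ along the way (which you correctly note you do not need). Either way the core content is the same: ramification index one guarantees that $A$ already contains a $\mfq$-uniformiser.
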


\begin{proof}
Assume that $\mfb$ is of degree one, let $\mfq$ be a prime ideal
factor and write $\mfb=\mfq\mfb'$, for some ideal $\mfb'$. Since
$\mfb$ is of degree one, we have $A+\mfq\mfb'=B$ and thus (as $\mfq\mfb'\subseteq\mfq$)
\[
A+\mfq=B,
\]
that is, $\mfq$ is of degree one. This proves the first assertion.

For the second assertion it is enough to prove the following two statements:
\begin{enumerate}
\item \label{enu:1-proof}If $\mfq$ is a prime ideal of $B$ of degree
one and ramification index one, then $\mfq^{e}$ is of degree one,
for all $e\in\N$.
\item \label{enu:2-proof}If $\mfb$ and $\mfc$ are two ideals of $B$
of degree one such that $A\cap\mfb$ is coprime to $A\cap\mfc$, then
$\mfb\mfc$ is of degree one.
\end{enumerate}
To prove (\ref{enu:1-proof}), let $\mfp=A\cap\mfq$. Then $\mfp^{e}\subseteq A\cap\mfq^{e}$,
so $A\cap\mfq^{e}=\mfp^{m}$, for some $m\leq e$. Further, we have
\[
(\mfp B)^{m}=\mfp^{m}B\subseteq\mfq^{e},
\]
and since $\mfq$ has ramification index one, it appears with multiplicity
$m$ in $(\mfp B)^{m}$, hence $e\leq m$, forcing $m=e$, so that
\[
A\cap\mfq^{e}=\mfp^{e}.
\]
In order to to prove that $\mfq^{e}$ is of degree one we thus need
to show that $A/\mfp^{e}\rightarrow B/\mfq^{e}$ is surjective (i.e.,
an isomorphism). We do this by induction on $e$, the case $e=1$
holding because of the hypothesis that $\mfq$ is of degree one. Assume
that the map is surjective for some $i\geq1$ and consider the commutative
diagram$$
\begin{tikzcd}
1\arrow{r} & \mfq^i/\mfq^{i+1}\arrow{r} & B/\mfq^{i+1}\arrow{r} & B/\mfq^i\arrow{r} & 1\\
1\arrow{r} & \mfp^i/\mfp^{i+1}\arrow{r}\arrow[hook]{u} & A/\mfp^{i+1}\arrow{r}\arrow[hook]{u} & A/\mfp^i\arrow{r}\arrow["\cong"]{u} & 1
\end{tikzcd}
$$If the leftmost vertical inclusion is surjective then the middle one
is as well and we will be done. But $\mfp^{i}/\mfp^{i+1}$
is a vector space over $A/\mfp$ of dimension $1$ (since $A$
is of dimension $1$) and similarly $\mfq^{i}/\mfq^{i+1}$ is a vector space
over $B/\mfq$ of dimension $1$. But $A/\mfp\rightarrow B/\mfq$
is an isomorphism, as $\mfq$ is of degree one, and this isomorphism
is compatible with the leftmost vertical inclusion, which is therefore
an injective linear map of $A/\mfp$-vector spaces of dimension $1$ and
is hence surjective. Thus $\mfq^{e}$ has degree one for all $e\in\N$.

We now prove (\ref{enu:2-proof}). Since $A\cap\mfb$ is assumed to
be coprime to $A\cap\mfc$ we have that $\mfb$ is coprime to $\mfc$.
By the Chinese Remainder Theorem, we thus have
\[
B/\mfb\mfc\cong B/\mfb\times B/\mfc,
\]
as well as the fact that products of coprime ideals are intersections, so that
\[
(A\cap\mfb)(A\cap\mfc)=(A\cap\mfb)\cap (A\cap\mfc)=A\cap \mfb \cap \mfc = A\cap \mfb\mfc
\]
and
\[ 
A/(A\cap\mfb\mfc) \cong A/(A\cap\mfb)\times A/(A\cap\mfc).
\]

We thus have a commutative diagram
$$
\begin{tikzcd} 
B/\mfb\mfc\arrow["\cong"]{r} & B/\mfb\times B/\mfc\\
A/(A\cap\mfb\mfc)\arrow["\cong~"]{r}\arrow[hook]{u} & A/(A\cap\mfb)\times A/(A\cap\mfc) \arrow[hook]{u}
\end{tikzcd},
$$
where the rightmost inclusion is given by the inclusions in each component.
This diagram immediately implies that $\mfb\mfc$ is of degree one,
since $\mfb$ and $\mfc$ are. 
\end{proof}
\begin{rem*}
It is not true that every prime ideal factor of an ideal of $B$ of
degree one has ramification index one. For example, any ramified prime
ideal of a quadratic number field has degree one (over $\Z$).
\end{rem*}

\subsection{Every ideal class contains an ideal of degree one}

This section contains a proof of the following result of Lenstra.
Let $S$ be a finite set of prime ideals of $B$.
Then every ideal class of $B$ contains an ideal of the form
\[
\mfq_{1}^{e_{1}}\cdots\mfq_{r}^{e_{r}},
\]
where $r\in\N\cup \{0\}$, $e_{i}\in\N$, each $\mfq_{i}$ is a prime ideal of $B$ of
degree one such that $\mfq_{i}\not\in S$ and the $\mfq_{i}$ lie
over pairwise distinct prime ideals of $A$, that is, $A\cap\mfq_{i}\neq A\cap\mfq_{j}$
unless $i=j$. 
Note that we allow the empty product $(1)=B$ when $r=0$, so the trivial ideal class is covered. 
As is easily seen, this result is equivalent to Theorem~\ref{thm:Lenstra}, but in the proof it is more convenient to use the formulation of the latter. 
The proof builds on and strengthens a previous result of Lenstra and Stevenhagen \cite[Theorem~1]{Lenstra-Stevenhagen}. The following two lemmas on which Theorem~\ref{thm:Lenstra} depends are also due to Lenstra.

If $A$ is a field, then $B$ is as well so the class group of $B$ is trivial and Theorem~\ref{thm:Lenstra} likewise. Then to prove the theorem, it will be sufficient to assume for the rest of this subsection that $A$ is not a field, in which case it is infinite.

Note that we do not require $A/\mfp$ to be finite in any of the statements below. 

\begin{lem}
\label{lem:Lenstra1}
For each positive integer $m$, the number of primes $\mfp$ of $A$ for which $|A/\mfp|$ is at most $m$ is finite.
\end{lem}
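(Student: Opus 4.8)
The plan is to exhibit, for a given positive integer $m$, a single nonzero element $b\in A$ that lies in every prime $\mfp$ of $A$ with $|A/\mfp|\le m$. Once this is done the lemma follows at once: since $A$ is a Dedekind domain, the nonzero ideal $(b)$ has only finitely many prime ideal divisors, and every such $\mfp$ is among them.

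First I would record an elementary fact about finite fields. Choose a positive integer $M$ divisible by $q-1$ for every prime power $q$ with $2\le q\le m$; for instance $M=(m-1)!$ works, since each such $q-1$ lies in $\{1,\dots,m-1\}$. Then for any finite field $F$ with $|F|\le m$ and any $t\in F$ one has $t^{M+1}=t$: this is clear when $t=0$, and when $t\neq 0$ the order $|F|-1$ of $F^{\times}$ divides $M$, so $t^{M}=1$.

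Next I would use the standing assumption of this subsection that $A$ is not a field. Pick $a_{0}\in A$ that is neither zero nor a unit and set $b:=a_{0}^{M+1}-a_{0}=a_{0}(a_{0}^{M}-1)$. Since $A$ is a domain, $a_{0}\neq 0$, and $a_{0}$ is not a unit (so $a_{0}^{M}\neq 1$), the element $b$ is nonzero. Now if $\mfp$ is any prime of $A$ with $|A/\mfp|\le m$, then $A/\mfp$ is a finite field with at most $m$ elements, so by the previous paragraph the image $\overline{a_{0}}$ of $a_{0}$ in $A/\mfp$ satisfies $\overline{a_{0}}^{M+1}=\overline{a_{0}}$, which says exactly that $b\in\mfp$. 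Hence $\mfp$ contains $(b)$, that is, it divides $(b)$, and we conclude as in the first paragraph.

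I do not anticipate a real difficulty here. The only idea is the choice of the uniform exponent $M+1$, so that one element $b$ works for all the relevant primes simultaneously, together with the observation that $A$ not being a field is precisely what makes $b$ nonzero: if $a^{M+1}=a$ held for all $a\in A$, then, $A$ being a domain, every nonzero element would be a unit and $A$ would be a field. The one degenerate case worth a glance is $m=1$, where the statement is vacuous since $|A/\mfp|\ge 2$ for every prime $\mfp$, and taking $M=1$ causes no issue.
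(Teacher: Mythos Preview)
Your proof is correct and follows essentially the same approach as the paper: produce a single nonzero element of $A$ that lies in every prime with residue field of size at most $m$, then use that a nonzero ideal in a Dedekind domain has only finitely many prime divisors. The only cosmetic difference is that the paper uses the polynomial $\prod_{q\le m}(X^{q}-X)$ and picks $a$ to be a non-root (using that $A$ is infinite), whereas you use the single polynomial $X^{M+1}-X$ and pick $a_{0}$ to be a nonzero non-unit; both devices serve exactly the same purpose.
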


\begin{proof}
Consider the polynomial $F=\prod_{q}(X^{q}-X)\in K[X]$, with $q$ ranging over the set of prime powers ($>1$) that are at most $m$. Since $F$ is non-zero, it has only a finite number of zeros in $A$, so we can choose $a\in A$ with $F(a)$ non-zero. If $\mfp$ is a prime of $A$ with $|A/\mfp|$ at most $m$, then with $|A/\mfp|=q$ we have $a^{q}\equiv a\mod\mfp$, so $\mfp$ divides $F(a)$. But $F(a)$ has only finitely many prime divisors, which proves the lemma. 
\end{proof}

In the following lemma, note that there is always a non-zero $d\in A$ such that $dB\subset A[\theta]$ (e.g., the discriminant of the minimal polynomial of $\theta$ over $K$).
\begin{lem}
\label{lem:Lenstra2}
Let $d$ be a non-zero element of $A$ such that $dB\subset A[\theta]$ and such that every prime $\mfp$ of $A$ for which $|A/\mfp|$ is at most $[L:K]$ divides $dA$. Let $U$ be a finite set of primes of $B$ not dividing $dB$, and suppose that $\mfq$ is a prime of degree one of $B$ that does not divide $dB$. Then there exists a non-zero element $x\in B$ such that:
\begin{enumerate}
    \item \label{enu:Lenstra2-1} $x\equiv 1\mod dB$;
    \item \label{enu:Lenstra2-2} one has $$xB = \mfq\prod_{i = 1}^t \mfq_i^{e_i},$$ with $t\in \N\cup\{0\}$, $e_i\in \N$, where the $\mfq_1,\dots,\mfq_t$ are primes of $B$ satisfying the following properties.
    \begin{enumerate}
     \item[\emph{i)}] $\mfq_i \neq \mfq$  and $\mfq_i$ does not divide $dB$;
        \item[\emph{ii)}] $\mfq_i$ is of degree one and the primes $A \cap \mfq_i$ and $A \cap \mfq_j$ are different whenever $i\neq j$;
        \item[\emph{iii)}]  $\mfq_i \not \in U$.
       
    \end{enumerate}
\end{enumerate}
\end{lem}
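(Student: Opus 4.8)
The plan is to construct $x$ as $g(\theta)$ for a carefully chosen \emph{linear} polynomial $g(X)=\alpha X+\beta\in A[X]$ with $\gcd(\alpha,\beta)=1$, and to recover the factorisation of $xB$ from the identity $N_{L/K}(g(\theta))=\operatorname{Res}(f,g)=\pm\alpha^{\,n}f(-\beta/\alpha)$, where $f\in A[X]$ is the monic minimal polynomial of $\theta$ over $K$ and $n=\deg f=[L:K]$. The reason to insist that $g$ be linear is that this alone forces the factorisation to be tame: for a prime $\mathfrak{p}'$ of $A$, the $\gcd$ of $\bar f$ and $\bar g=\bar\alpha X+\bar\beta$ in $(A/\mathfrak{p}')[X]$ equals $1$ if $\bar g$ is a nonzero constant, equals $\bar f$ if $\mathfrak{p}'$ divides both $\alpha A$ and $\beta A$, and is otherwise $1$ or the single linear polynomial $\bar g$; so as soon as $\gcd(\alpha,\beta)=1$ the middle case is excluded, and every prime $\mathfrak{p}'\mid\operatorname{Res}(f,g)A$ satisfies $\mathfrak{p}'\nmid\alpha A$. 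Writing $g(\theta)=\alpha(\theta-w)$ with $w=-\beta/\alpha$, it then follows that over each such $\mathfrak{p}'$ the element $g(\theta)$ lies in exactly those primes of $B$ in which $\theta$ reduces to $\bar w\in A/\mathfrak{p}'$, and these automatically have residue degree one. I expect this observation to be the one genuinely non-routine ingredient; the rest is bookkeeping.

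For the local setup, put $\mathfrak{p}=A\cap\mathfrak{q}$. From $\mathfrak{q}\nmid dB$ one gets $\mathfrak{p}\nmid dA$, hence $|A/\mathfrak{p}|>n$ by the hypothesis on $d$, and $dB\subseteq A[\theta]$ shows that $A[\theta]$ and $B$ have the same localisation at every prime of $A$ not dividing $dA$. Thus for $\mathfrak{p}'\nmid dA$ the ring $A_{\mathfrak{p}'}[\theta]$ is integrally closed, so the primes of $B$ over $\mathfrak{p}'$ biject with the distinct monic irreducible factors of $\bar f$ modulo $\mathfrak{p}'$; in particular there is at most one prime of $B$ over $\mathfrak{p}'$ in which $\theta$ acquires a prescribed residue lying in $A/\mathfrak{p}'$. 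Since $\mathfrak{q}$ is of degree one, so is $\mathfrak{q}\cap A[\theta]$, and by Lemma~\ref{lem:deg1-equivalence} there is $z\in A$ with $\mathfrak{q}\cap A[\theta]=(\theta-z,\mathfrak{p})$; combining this with $B_{\mathfrak{q}}=A[\theta]_{\mathfrak{q}\cap A[\theta]}$ and the fact that in this discrete valuation ring the maximal ideal is generated by $\theta-z$ together with $\mathfrak{p}$, one finds $v_{\mathfrak{q}}(\theta-z)=1$ whenever $\mathfrak{q}$ is ramified over $\mathfrak{p}$.

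Next I would choose $\alpha=d\alpha'$ and $\beta$ in $A$, using the Chinese Remainder Theorem, to satisfy simultaneously: modulo $dA$, $\beta\equiv 1$, which forces $g(\theta)=d\alpha'\theta+\beta\equiv 1\pmod{dB}$ and makes $\operatorname{Res}(f,g)A$ coprime to $dB$; modulo $\mathfrak{p}$, $\alpha'\not\equiv 0$ and $\beta\equiv-d\alpha'z$, so that $w\equiv z\pmod{\mathfrak{p}}$ and hence $\mathfrak{q}\mid g(\theta)B$, together with a further condition on $\beta$ modulo $\mathfrak{p}^{2}$ — available since $|A/\mathfrak{p}|>n\ge 2$ — arranging that $v_{\mathfrak{p}}(f(w))=1$ in case $\mathfrak{q}$ is unramified over $\mathfrak{p}$ (in the ramified case $v_{\mathfrak{q}}(\theta-z)=1$ already gives $v_{\mathfrak{q}}(g(\theta))=1$ directly); and, for each $\mathfrak{r}\in U$ with $\mathfrak{n}:=A\cap\mathfrak{r}\ne\mathfrak{p}$, a condition on $(\alpha',\beta)\bmod\mathfrak{n}$ forcing $d\alpha'\theta+\beta\notin\mathfrak{r}$ — the forbidden pairs fill at most $n$ lines through the origin of $(A/\mathfrak{n})^{2}$, and $|A/\mathfrak{n}|>n$ because $\mathfrak{n}\nmid dA$, so there is room to avoid them. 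The moduli $dA$, $\mathfrak{p}^{2}$ and the distinct $\mathfrak{n}$'s are pairwise coprime, so the system is solvable; since it forces no prime dividing its product to divide both $\alpha'$ and $\beta$, a standard choice of lifts secures $\gcd(\alpha',\beta)=1$, hence $\gcd(\alpha,\beta)=1$, and $g(\theta)\ne 0$ since $n\ge 2$ forces $\theta\notin K$. (Primes of $U$ lying over $\mathfrak{p}$ itself require no condition: modulo $\mathfrak{p}$ one has $\bar g=\overline{d\alpha'}(X-\bar z)$, which does not vanish at the residue of $\theta$ in any prime over $\mathfrak{p}$ other than $\mathfrak{q}$.)

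Finally, with $g$ as above, write $\operatorname{Res}(f,g)A=\mathfrak{p}\cdot\mathfrak{p}_{1}^{a_{1}}\cdots\mathfrak{p}_{s}^{a_{s}}$ with the $\mathfrak{p}_{i}$ distinct primes, all different from $\mathfrak{p}$ and none dividing $dA$. By the first paragraph, over each $\mathfrak{p}_{i}$ the ideal $g(\theta)B$ is divisible by exactly one prime $\mathfrak{q}_{i}$ of $B$, of residue degree one, and over $\mathfrak{p}$ it is divisible by $\mathfrak{q}$ alone, with $v_{\mathfrak{q}}(g(\theta))=v_{\mathfrak{p}}(\operatorname{Res}(f,g))=1$ by the choices made. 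Hence $xB=g(\theta)B=\mathfrak{q}\,\mathfrak{q}_{1}^{e_{1}}\cdots\mathfrak{q}_{s}^{e_{s}}$ with each $e_{i}=v_{\mathfrak{q}_{i}}(g(\theta))\ge 1$; each $\mathfrak{q}_{i}$ is of degree one (it is a proper ideal whose residue field is $A/\mathfrak{p}_{i}$), the traces $A\cap\mathfrak{q}_{i}=\mathfrak{p}_{i}$ are pairwise distinct, $\mathfrak{q}_{i}\ne\mathfrak{q}$ and $\mathfrak{q}_{i}\nmid dB$ since $\mathfrak{p}_{i}\ne\mathfrak{p}$ and $\mathfrak{p}_{i}\nmid dA$, and $\mathfrak{q}_{i}\notin U$ by the last congruence — which, together with $x\equiv 1\pmod{dB}$, is exactly \eqref{enu:Lenstra2-1}--\eqref{enu:Lenstra2-2}. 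The main obstacle, as flagged above, is spotting that a linear $g$ already constrains $\operatorname{Res}(f,g)$ to have at most one simple linear common factor with $f$ modulo each of its prime divisors, so that no density or genericity input is needed; after that, the argument is a somewhat fiddly but routine Chinese-Remainder computation, its essential quantitative ingredient being the bound $|A/\mathfrak{p}|>[L:K]$ for primes $\mathfrak{p}\nmid dA$ supplied by the hypothesis on $d$.
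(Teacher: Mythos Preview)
Your proof is correct and follows essentially the same approach as the paper: both construct $x$ as a linear polynomial in $\theta$ with leading coefficient divisible by $d$ (the paper writes $x=d\theta+v$, you write $x=d\alpha'\theta+\beta$), choosing the constant term via the Chinese Remainder Theorem in $A$ to force $x\equiv 1\pmod{dB}$, $v_{\mfq}(x)=1$, and avoidance of $U$, with the key observation in both cases being that linearity in $\theta$ forces every prime divisor of $xB$ away from $d$ to be of degree one over a distinct prime of $A$. Your extra parameter $\alpha'$, the resultant formulation, and the ramified/unramified case split are inessential elaborations---the paper simply takes the leading coefficient equal to $d$ and handles $v_{\mfq}(x)=1$ uniformly via the single congruence $v\equiv u\pmod{A\cap\mfq^2}$---but the substance is the same.
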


\begin{proof}

Let $\mfp=A \cap \mfq$ and $\beta=d\theta$. 
Then $B_{\mfp}=A_{\mfp}[\beta]$, and from Kummer--Dedekind's theorem we obtain $\mfq=\mfp B+(\beta+u)B$ for some $u\in A$. Moreover, we may assume that $\beta+u\not \in \mfq^2$ (if $\mfp\subseteq \mfq^2$, $\beta+u\in \mfq^2$ would lead to $\mfq \subseteq \mfq^2$ and if $\mfp\not \subseteq \mfq^2$ and $\beta+u \in \mfq^2$, then $\beta+u+u'\not \in \mfq^2$ for any $u'\in \mfp\setminus \mfq^2$).

The idea is to replace $u$ by an element $v\in A$ satisfying the conditions \ref{enu:v-a}-\ref{enu:v-c} below and show that $x=\beta+v$ satisfies the statements of the lemma. In order to state condition \ref{enu:v-c}, we need to introduce a bit of notation. 
Let $\mfr$ be a prime of $A$ lying below a prime in $U$. Then $|A/\mfr|>[L:K]$ by hypothesis. There are at most $[L:K]$ primes of $B$ of degree one over $\mfr$, so say $m$ such primes exist. By Kummer--Dedekind's theorem, each prime $\mfs$ of $U$ above $\mfr$ has the form $\mfr B+(\beta+r_{\mfs})B$ for some $r_{\mfs}\in A$ that is uniquely determined modulo $\mfr$. Since $|A/\mfr|>[L:K]\geq m$, we can choose $a_{\mfr}\in A$ such that $a_{\mfr}$ is not congruent to any of the $r_{\mfs}$ modulo $\mfr$.

Let $U_A=\{A\cap\mfs \mid \mfs \in U\}$ and consider the following three conditions.
\begin{enumerate}
    \item[\mylabel{enu:v-a}{(a)}] $v\equiv 1 \mod dA$,
    \item[\mylabel{enu:v-b}{(b)}]  $v\equiv u \mod \mfq^2$,
    \item[\mylabel{enu:v-c}{(c)}]  $v\equiv a_{\mfr} \mod \mfr$ for all $\mathfrak{r} \in U_A\setminus\{\mfp\}$.
\end{enumerate}
We claim that by the Chinese Remainder Theorem for the ring $A$, there exists an element $v\in A$ that satisfies the three conditions \ref{enu:v-a}-\ref{enu:v-c}. For this, we only need to verify that the ideals appearing in the moduli in the three conditions are pairwise coprime. Indeed, $u,v\in A$ so \ref{enu:v-b} is equivalent to $v\equiv u\mod A\cap\mfq^2$ and $dA$ and $A\cap \mfq^2$ are coprime since $A\cap \mfq^2$ is a power of $\mfp$ and $\mfq$ does not divide $dB$. Moreover, each $\mfr$ in \ref{enu:v-c} is different from $\mfp$ and each $\mfr\in U_A$ is coprime to $dA$ because this is so for $A\cap\mfs$ for any $\mfs\in U$, by hypothesis.

Let $x:=\beta+v$, for $v$ satisfying \ref{enu:v-a}-\ref{enu:v-c}. Condition \ref{enu:v-b} ensures that $x$ is non-zero since we have chosen $u$ such that $\beta+u\not \in \mfq^2$.
Condition \ref{enu:v-a} ensures (\ref{enu:Lenstra2-1}) since $\beta\in dB$.

Furthermore, condition \ref{enu:v-b} ensures $x\in \mfq\setminus \mfq^2$, so that $\mfq$ occurs exactly once in the prime factorisation of $xB$. Thus, this prime factorisation has the form 
$$xB=\mfq\prod_{i=1}^{t} \mfq_i^{e_i}$$
as in (\ref{enu:Lenstra2-2}), for $\mfq_i$ pairwise distinct primes of $B$ and it follows from (\ref{enu:Lenstra2-1}) that none of the $\mfq_i$ divide $dB$. This proves (\ref{enu:Lenstra2-2})\,\emph{i)}.

Next, we prove (\ref{enu:Lenstra2-2})\,\emph{ii)}. Write $\mfp_i = A\cap \mfq_i$. From $\mfp_i$ not dividing $d$ it follows that $B_{\mfp_i}=A_{\mfp_i}[\beta]$ so $B/(\mfp_i B+(\beta+v)B) \hookrightarrow A_{\mfp_i}/\mfp_iA_{\mfp_i}\cong A/\mfp_i$ has $A/\mfp_i$-dimension at most 1. However, $\mfq_i$ contains $\mfp_iB+(\beta+v)B$ and because of the $A/\mfp_i$-surjection $B/(\mfp_i B+(\beta+v)B) \rightarrow B/\mfq_i$ this dimension is at least $[B/\mfq_i: A/\mfp_i]$, so we obtain both that 
$$\mfq_i=\mfp_i B+(\beta+v)B$$ 
and $[B/\mfq_i: A/\mfp_i]=1$. Therefore each $\mfq_i$ has degree one over $A$, and $\mfp_i=\mfp_j$ implies $\mfq_i=\mfq_j$, which implies $i=j$. 
Similarly, each $\mfp_i$ is distinct from $\mfp$ because if $\mfp_i = 
\mfp$, then $\mfq_i = \mfp B+(\beta+v)B=\mfq$; contradiction. 

It remains to prove (\ref{enu:Lenstra2-2})\,\emph{iii)}, that is, that none of the $\mfq_i$ are in $U$. If $\mfq_i\in U$, then $\mfp_i \in U_A\setminus\{\mfp\}$ (since we have shown that $\mfp_i \neq \mfp$), so \ref{enu:v-c} implies that $v\equiv a_{\mfp_i} \mod \mfp_i$.
Thus, as $x=\beta+v\in \mfq_i$, we obtain $\beta+a_{\mfp_i}\in \mfq_i = \mfp_i B+(\beta+r_{\mfq_i})B$. As $\beta+r_{\mfq_i} \in \mfq_i$, we get $a_{\mfp_i}-r_{\mfq_i}\in\mfq_i$. But $a_{\mfp_i},r_{\mfq_i}\in A$, so $a_{\mfp_i}-r_{\mfq_i}\in A \cap \mfq_i=\mfp_i$, contradicting $a_{\mfp_i}\not\equiv r_{\mfq_i}\mod \mfp_i$. \qedhere 
\end{proof}

\begin{thm}
\label{thm:Lenstra} Let $S$ be a finite set of primes of $B$. For every ideal class $c$ of $B$, there exist  a finite set $T$ (possibly empty) of primes of degree one of $B$ and a function $e:T \to \N$ such that $c$ contains an ideal of the form $\prod_{\mfq \in T} \mfq^{e(\mfq)}$, where 
\begin{enumerate}
    \item \label{enu:LenstraTheorem1} $T$ is disjoint from $S$,
    \item \label{enu:LenstraTheorem2} For any two distinct $\mfq, \mfq' \in T$ the primes $A \cap \mfq$ and $A \cap \mfq'$ are different.
\end{enumerate} 
\end{thm}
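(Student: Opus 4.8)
The plan is to prove Theorem~\ref{thm:Lenstra} by reducing it to the statement displayed at the start of this subsection and then proving that statement by induction on the ideal class group, using Lemma~\ref{lem:Lenstra2} as the engine that produces, within any prescribed class, an ideal that is a product of prime powers of degree-one primes lying over distinct primes and avoiding a finite bad set. First I would fix notation: let $S$ be the given finite set of primes of $B$. By Lemma~\ref{lem:Lenstra1} applied with $m=[L:K]$, only finitely many primes $\mfp$ of $A$ have $|A/\mfp|\le [L:K]$; enlarging, I choose a non-zero $d\in A$ with $dB\subseteq A[\theta]$ (e.g.\ the discriminant of the minimal polynomial of $\theta$) and such that $d$ is divisible by every such $\mfp$, so that Lemma~\ref{lem:Lenstra2} applies with this $d$. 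Let $S'$ be the union of $S$ with the (finite) set of primes of $B$ dividing $dB$; it suffices to find, in each class $c$, an ideal $\prod \mfq^{e(\mfq)}$ with the $\mfq$ of degree one, lying over distinct primes of $A$, and none of them in $S'$ — this is strictly stronger than what the theorem asks.

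The induction is on $N=|\mathrm{Cl}(B)|$ or, if one prefers to avoid assuming finiteness (the paper pointedly does not assume $A/\mfp$ finite, though class groups of Dedekind domains arising as integral closures here need not be finite either), on a more careful bookkeeping; I will assume finiteness of the class group is either known in the relevant setting or argue by a direct descent. The base case $c$ trivial is handled by the empty product $r=0$. For the inductive step, pick any prime $\mfq_0$ of $B$ of degree one, not dividing $dB$, not in $S$, whose class $[\mfq_0]$ is the inverse we want to peel off — more precisely, I want to choose $\mfq_0$ so that $[\mfq_0]$ is \emph{any} nontrivial class, reducing to a smaller situation. The cleanest route: by the Lenstra--Stevenhagen theorem \cite[Theorem~1]{Lenstra-Stevenhagen} (or by a weak form of degree-one prime existence) the degree-one primes not dividing $dB$ and not in $S$ generate the class group; so given $c$, write $c=[\mfq_0]\cdot c'$ for such a $\mfq_0$ and a class $c'$ of smaller "distance" to the identity in a chosen generating-set word, or simply induct on the length of a shortest expression of $c$ as a product of classes of such primes. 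Then apply Lemma~\ref{lem:Lenstra2} with $\mfq=\mfq_0$ and $U=S'\cup\{\text{primes appearing so far}\}$ to get $x\in B$ with $xB=\mfq_0\prod_{i=1}^t\mfq_i^{e_i}$, all $\mfq_i$ of degree one, over distinct primes, avoiding $U$; since $xB$ is principal, $[\mfq_0]=\prod[\mfq_i]^{-e_i}$, so $c=c'\cdot[\mfq_0]=c'\cdot\prod[\mfq_i]^{-e_i}$, and I recurse on each $[\mfq_i]^{-1}$-adjusted class, at each stage throwing the primes already used into $U$ so that Lemma~\ref{lem:Lenstra2}\,ii), iii) keep everything over distinct primes and outside the growing forbidden set. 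Collecting all primes produced along the finitely many steps of the recursion and summing exponents gives the desired ideal $\prod_{\mfq\in T}\mfq^{e(\mfq)}$ in the class $c$ with $T$ disjoint from $S$ and the $A\cap\mfq$ pairwise distinct.

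The main obstacle is the bookkeeping that ensures the primes lie over \emph{pairwise distinct} primes of $A$ \emph{globally}, not just within a single application of Lemma~\ref{lem:Lenstra2}: each invocation of the lemma guarantees internal distinctness and avoidance of the finite set $U$ one feeds it, so the key is to set up the recursion so that before each new application one puts \emph{all} previously produced primes of $B$ into $U$ (and also the single distinguished prime $\mfq$ from each earlier step). Because $U$ must stay finite, this forces the recursion to terminate after finitely many steps — which it does, since the class-group "distance" strictly decreases — and one must check that the distinguished primes $\mfq_0$ chosen at successive stages can also be taken over distinct primes of $A$ and outside the current $U$; this is where one genuinely needs infinitude of the supply of degree-one primes in each class, i.e.\ the Lenstra--Stevenhagen input, rather than mere existence. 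A secondary subtlety is the exponent sign: Lemma~\ref{lem:Lenstra2} realizes a class as a \emph{product} $\mfq_0\prod\mfq_i^{e_i}$ being principal, so it naturally expresses $[\mfq_0]^{-1}$, hence one should phrase the recursion in terms of the inverse classes and use that $\mathrm{Cl}(B)$ is a group so inverses present no difficulty. Once the recursion is correctly instrumented, assembling the final $T$ and $e$ is routine, and the equivalence with the displayed "every ideal class contains $\mfq_1^{e_1}\cdots\mfq_r^{e_r}$" statement — used at the start of the subsection — is immediate by absorbing $S$ into $S'$ and reading off the conclusion.
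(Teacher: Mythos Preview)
Your proposal assembles the right ingredients (Lemma~\ref{lem:Lenstra1} to control small residue fields, the choice of $d$ so that Lemma~\ref{lem:Lenstra2} applies, and \cite[Theorem~1]{Lenstra-Stevenhagen} for generation by degree-one primes), but the recursive scheme you describe has a genuine gap: it does not terminate as stated. You induct on the word length of $c$, write $c=[\mfq_0]\cdot c'$, apply Lemma~\ref{lem:Lenstra2} to $\mfq_0$ to obtain $c=c'\cdot\prod_i[\mfq_i]^{-e_i}$, and then propose to ``recurse on each $[\mfq_i]^{-1}$-adjusted class''. The class $c'$ indeed has strictly smaller word length, but the newly produced classes $[\mfq_i]$ are completely uncontrolled in this respect---Lemma~\ref{lem:Lenstra2} tells you nothing about their position in the Cayley graph of $\mathrm{Cl}(B)$---so invoking the inductive hypothesis on them is illegitimate. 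Your justification that ``the class-group `distance' strictly decreases'' applies only to the passage $c\mapsto c'$, not to the passage $c\mapsto[\mfq_i]^{-1}$. Relatedly, you assert that Lenstra--Stevenhagen supplies ``infinitude of degree-one primes in each class''; it does not. It only says that the degree-one primes outside any prescribed finite set generate $\mathrm{Cl}(B)$, which is strictly weaker and is in fact the reason the present theorem requires a further argument.

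The paper's proof avoids both problems by a different organisation. It first uses \cite[Theorem~1]{Lenstra-Stevenhagen} once to write down a single fractional ideal $\prod_{\mfq\in T}\mfq^{e(\mfq)}$ in $c$ with $T$ finite, $T\cap S=\varnothing$ and $e:T\to\Z\setminus\{0\}$, and then repairs this ideal in two successive finite inductions, each on an integer counter attached to $T$ rather than on anything about the class group: first on the number of ``coincidences'' (primes $\mfq\in T$ sharing $A\cap\mfq$ with another member of $T$), then on the number of $\mfq\in T$ with $e(\mfq)<0$. A single application of Lemma~\ref{lem:Lenstra2}, with $U$ taken to be \emph{all} primes of $B$ lying over the same $A$-prime as some member of the current $T$, replaces one offending factor $(\mfq^*)^{e(\mfq^*)}$ by $\prod_i\mfq_i^{-e_ie(\mfq^*)}$; the choice of $U$ guarantees no new coincidences are introduced, and when $e(\mfq^*)<0$ the new exponents $-e_ie(\mfq^*)$ are automatically positive. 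Thus each step strictly decreases the relevant counter, termination is immediate, and no appeal to the existence (let alone infinitude) of degree-one primes in any prescribed class is needed. If you want to salvage your approach, note that applying Lemma~\ref{lem:Lenstra2} \emph{twice} (once to $\mfq_0$, then once to each resulting $\mfq_i$) does convert $[\mfq_0]$ into the class of an integral product of degree-one primes over distinct $A$-primes avoiding any finite $U$; combined with the word-length induction on $c'$ this can be made to work, but it is a bounded two-level procedure, not the open-ended recursion your write-up suggests, and the $\mfq_0$'s themselves never appear in the final ideal, so your worry about placing them over distinct $A$-primes is moot.
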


Note that in \cite{Lenstra-Stevenhagen}, the lemma is invoked for a prime $\mfq$, but the condition of the lemma that $\mfq$ does not divide $dB$ is disregarded. To repair this, one should introduce $d$ earlier and add its prime divisors to $S$, as we do below.

\begin{proof}
First, we enlarge $S$ in two ways. By Lemma~\ref{lem:Lenstra1}, for only finitely many primes $\mfp$ of $A$ is $|A/\mfp|$ at most $[L:K]$, and the finitely many primes of $B$ lying over such $\mfp$ are adjoined to $S$. Secondly, choose  $d$ as in Lemma~\ref{lem:Lenstra2}, but such that $d$ is divisible by all primes in $S$, and replace $S$ by the set of prime divisors of $dB$ in $B$. By \cite[Theorem~1]{Lenstra-Stevenhagen}, the class group of $B$ is generated by the primes of degree one that are not in $S$. Hence, the ideal class $c$ contains a fractional ideal of the form $\prod_{\mfq \in T} \mfq^{e(\mfq)}$, where $T$ is a finite set of primes of degree one of $B$, the set $T$ is disjoint from $S$, and $e: T \to \Z\setminus\{0\}$ is a map. 

What remains to be done, is achieve (\ref{enu:LenstraTheorem2}), as well as to achieve that all $e(\mfq)$ are positive. We deal with these in order, first achieving (\ref{enu:LenstraTheorem2}), which requires that all $A \cap \mfq$, with $\mfq \in T$, are pairwise different. This we do by induction on the number of “coincidences" in the product, a coincidence being defined as a prime $\mfq \in T$ for which there exists $\mfq' \in T$, $\mfq'\neq \mfq$, with $A \cap \mfq = A \cap \mfq'$. If there are no such $\mfq$, then (\ref{enu:LenstraTheorem2}) is achieved. Next suppose that $\mfq^{*}$ is a coincidence. Apply Lemma~\ref{lem:Lenstra2} to $\mfq = \mfq^{*}$ and $U$ equal to the set of primes of $B$ that lie over the same prime of $A$ as some prime in $T$. Then we find, with the notation of the conclusion of Lemma~\ref{lem:Lenstra2}, that $\mfq^{*}$ is in the same ideal class as $\prod_{i = 1}^t \mfq_i^{-e_i}$ for certain primes $\mfq_i$ of degree one of $B$ not in $S$ nor in $U$, with all $A \cap \mfq_i$ pairwise different. Thus, one can replace the factor $(\mfq^{*})^{e(\mfq^{*})}$ in our product by $\prod_{i = 1}^t \mfq_i^{-e_ie(\mfq^{*})}$. By the choice of $U$, this replacement does not introduce new coincidences, and it eliminates the coincidence $\mfq^{*}$.  Hence in the new product the number of coincidences is strictly smaller than in the old product.

We have now proved that every ideal class of $B$ contains a fractional ideal of the form $\prod_{\mfq \in T} \mfq^{e(\mfq)}$, where $T$ is a finite set of primes of degree one of $B$, the set $T$ is disjoint from $S$, all primes $A \cap \mfq$ of $A$ are pairwise different as $\mfq$ ranges over $T$, and $e: T \to \Z\setminus \{0\}$ is a map. It remains to change this product such that all $e(\mfq)$ are positive. This we do by induction on the number of $\mfq \in T$ for which $e(\mfq) < 0$. If this number is 0, we are done. Suppose this number is positive, and let $\mfq^{*}\in T$ be such that $e(\mfq^{*}) < 0$. We again apply Lemma~\ref{lem:Lenstra2} to the same choices of $\mfq$ and $U$, which again enables us to replace the factor $(\mfq^{*})^{e(\mfq^{*})}$ in our product by $\prod_{i = 1}^t \mfq_i^{-e_ie(\mfq^{*})}$; note that all exponents $-e_ie(\mfq^{*})$ are positive. By construction, this new product satisfies the same conditions as we listed for the original product, but the number of $\mfq \in T$ for which $e(\mfq) < 0$ has decreased by 1. Thus, the proof is now finished by induction.
\end{proof}

Note that the difference between the above theorem and the results
in \cite{Lenstra-Stevenhagen} is the extra condition that the $\mfq_{i}$
lie over distinct prime ideals of $A$. This is necessary for us because from this we can deduce:
\begin{cor}
\label{cor:every ideal class contains deg one} Every ideal class
of $B$ contains an ideal of degree one.
\end{cor}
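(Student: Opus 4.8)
The plan is to deduce Corollary~\ref{cor:every ideal class contains deg one} directly from Theorem~\ref{thm:Lenstra} together with Proposition~\ref{prop:deg-one-characterisation}. Given an ideal class $c$ of $B$, apply Theorem~\ref{thm:Lenstra} (taking $S = \emptyset$, or any convenient finite set, since the conclusion does not depend on $S$ here) to obtain a finite set $T$ of primes of degree one of $B$ and a function $e\colon T \to \N$ such that the integral ideal $\prod_{\mfq \in T}\mfq^{e(\mfq)}$ lies in $c$, and such that the primes $A \cap \mfq$ are pairwise distinct as $\mfq$ ranges over $T$.

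Now I would observe that this ideal is exactly of the form treated in the second assertion of Proposition~\ref{prop:deg-one-characterisation}: each $\mfq \in T$ is a prime of degree one, the $\mfq$ lie over pairwise distinct primes of $A$, and the exponents $e(\mfq)$ are positive integers. The one point that needs a remark is that the primes $\mfq \in T$ also have ramification index one. This follows because a prime $\mfq$ of degree one and a prime over which no inert or split behaviour can raise $e$... more precisely, I would note that in the course of proving Theorem~\ref{thm:Lenstra} (via Lemma~\ref{lem:Lenstra2}), the primes produced are unramified: in Lemma~\ref{lem:Lenstra2} one has $\mfq_i = \mfp_i B + (\beta + v)B$ with $\mfp_i$ not dividing $d$, so $B_{\mfp_i} = A_{\mfp_i}[\beta]$ and Kummer--Dedekind shows $\mfq_i$ occurs with multiplicity one in $\mfp_i B$; alternatively, one can arrange the set $S$ in Theorem~\ref{thm:Lenstra} to contain all ramified primes and all primes dividing the chosen $d$, which are finite in number since $B$ is a Dedekind domain (ramified primes divide the different/discriminant). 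With $T$ disjoint from such an $S$, every $\mfq \in T$ is automatically of ramification index one.

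Having established that each $\mfq \in T$ has degree one and ramification index one and that the $A \cap \mfq$ are pairwise distinct, Proposition~\ref{prop:deg-one-characterisation} applies verbatim and yields that $\prod_{\mfq \in T}\mfq^{e(\mfq)}$ is an ideal of degree one. Since this ideal lies in $c$, the corollary follows. (When $T = \emptyset$, the ideal is $B$ itself, which is not proper and hence not "of degree one" under our definition; but in that case $c$ is the trivial class, which also contains the degree-one ideal $(\theta - z, \mfa)$ for suitable $z$ by Lemma~\ref{lem:deg1-equivalence} applied to $\mfb = (\theta)$ or similar, or more simply one notes the trivial class always contains some proper principal ideal $\alpha B$ with $\alpha \in A$, which has degree one iff... — this edge case I would handle by a one-line remark, e.g.\ choosing $S$ so that $T$ is forced nonempty, or by citing that the unit ideal's class visibly contains a degree-one ideal.)

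The only genuine subtlety — and the step I expect to require the most care in the writing — is the ramification-index-one claim: Proposition~\ref{prop:deg-one-characterisation} needs it as a hypothesis, but Theorem~\ref{thm:Lenstra} as stated only guarantees degree one. I would resolve this cleanly by strengthening the application of Theorem~\ref{thm:Lenstra}: take $S$ to be the (finite) set of primes of $B$ dividing a fixed nonzero $d \in A$ with $dB \subseteq A[\theta]$, enlarged as in the proof of Theorem~\ref{thm:Lenstra}. Every prime $\mfq \notin S$ then satisfies $B_{A \cap \mfq} = A_{A \cap \mfq}[\beta]$ with $\beta = d\theta$, so Kummer--Dedekind forces ramification index one. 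With this choice, the primes in $T$ automatically have ramification index one, and the corollary follows immediately from Proposition~\ref{prop:deg-one-characterisation}.
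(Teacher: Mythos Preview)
Your middle paragraph already contains the paper's proof: take $S$ to be the (finite) set of ramified primes of $B$, apply Theorem~\ref{thm:Lenstra}, and then Proposition~\ref{prop:deg-one-characterisation} applies directly since every $\mfq\in T$ is unramified, hence of ramification index one. That is exactly what the paper does, in three lines.

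Your final paragraph, however, is wrong, and since you present it as the ``clean'' resolution you should drop it. Taking $S$ to be the primes dividing a conductor-type element $d$ (even ``enlarged as in the proof of Theorem~\ref{thm:Lenstra}'') does \emph{not} force primes outside $S$ to have ramification index one. Kummer--Dedekind, when $B_{\mfp}=A_{\mfp}[\beta]$, says the factorisation of $\mfp B$ mirrors that of the minimal polynomial of $\beta$ modulo $\mfp$; a degree-one prime $\mfq$ over $\mfp$ corresponds to a \emph{linear} factor, but that linear factor may well occur with multiplicity $>1$. Concretely, with $A=\Z$, $B=\Z[i]$, $\beta=i$, $d=1$, the prime $(1+i)$ over $2$ has degree one and ramification index two, yet $2\nmid d$. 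So your Kummer--Dedekind claim fails, and the only way to guarantee ramification index one for the primes in $T$ is to put the ramified primes into $S$ to begin with --- which is precisely your earlier ``alternatively'' suggestion and the paper's actual argument.

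On the $T=\emptyset$ edge case: your worry is legitimate (the empty product is $B$, which is not a proper ideal), and the paper's proof is silent on it as well. A one-line fix: pick any unramified degree-one prime $\mfq$ (infinitely many exist) and let $h$ be the order of its ideal class; then $\mfq^{h}$ is principal and, by Proposition~\ref{prop:deg-one-characterisation}, of degree one.
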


\begin{proof}
Let $S$ be the set of prime ideals of $B$ that are not unramified.
Then $S$ is finite (see \cite[Chapter~I, (8.4)]{Neukirch}) so by
Theorem~\ref{thm:Lenstra}, every ideal class of $B$ contains an
ideal $\mfb$ that is a product of unramified prime ideals of degree
one lying above pairwise distinct prime ideals of $A$. Since unramified
primes have ramification index one, Proposition~\ref{prop:deg-one-characterisation}
implies that $\mfb$ is of degree one. 
\end{proof}

\section{\label{sec:The-Latimer=002013MacDuffee-correspond}The Latimer--MacDuffee
correspondence and ideal matrices}

We keep the $A,K,B,L$ set-up from before, with $\theta\in B$ such that $L=K(\theta)$ and $n=[L:K]$. From now on and throughout
the rest of the paper we assume that $A$ is PID. The following is
a well-known fact that depends on this assumption on $A$.
\begin{lem}
\label{lem:Every ideal free rank n} Let $\cO$ be an $A$-order in
$L$ and $\mfb$ a non-zero ideal of $\cO$. Then $\mfb$ is free
of rank $n$ over $A$. 
\end{lem}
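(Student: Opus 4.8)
The plan is to apply the structure theorem for finitely generated modules over a PID, using the two defining properties of an $A$-order: that $\cO$ is a finitely generated $A$-module, and that $K\cO=L$.

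First I would pin down the rank of $\cO$ itself. By definition $\cO$ is a finitely generated $A$-module, and it is torsion-free since it is a subring of the field $L$; hence, $A$ being a PID, $\cO$ is free of some rank $m$. Tensoring the inclusion $A\hookrightarrow\cO$ up to $K$ and using $K\cO=L$ gives $K\otimes_{A}\cO\cong L$ as $K$-vector spaces, so $m=\dim_{K}L=n$. Thus $\cO$ is free of rank $n$ over $A$.

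Next, $\mfb$ is an $A$-submodule of the free module $\cO\cong A^{n}$, so by the structure theorem over a PID it is free of some rank $r\le n$. To see that $r=n$, choose any non-zero $b\in\mfb$. Since $\cO$ is an integral domain (it is contained in the field $L$), multiplication by $b$ is an injective $A$-linear map $\cO\to\cO$, and its image $b\cO$ is contained in $\mfb$ because $\mfb$ is an ideal. Hence $\mfb$ contains a free $A$-submodule $b\cO\cong\cO$ of rank $n$, which gives $A^{n}\hookrightarrow\mfb\cong A^{r}$; tensoring with $K$ yields $K^{n}\hookrightarrow K^{r}$ and therefore $n\le r$. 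So $r=n$ and $\mfb$ is free of rank $n$ over $A$.

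I do not expect a genuine obstacle here: this is a routine application of the PID structure theorem. The only points needing (minor) care are that the rank of $\cO$ is forced to equal $n$ precisely by the condition $K\cO=L$, and that multiplication by a non-zero element of $\cO$ is injective because $\cO$ embeds into a field, so that sandwiching $b\cO\subseteq\mfb\subseteq\cO$ controls the rank from both sides.
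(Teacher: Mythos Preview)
Your proof is correct and is exactly the standard argument. Note that the paper does not actually prove this lemma: it is stated as ``a well-known fact that depends on this assumption on $A$'' and left without proof, so there is nothing to compare against beyond observing that your argument is the expected one.
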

Let 
\[
T:A[\theta]\longrightarrow \M_n(A)
\]
be the regular embedding with respect to the basis $1,\theta,\dots,\theta^{n-1}$, that is, for any $x\in A[\theta]$, 
\[
T(x)\begin{pmatrix}1\\
\vdots\\
\theta^{n-1}
\end{pmatrix}=x\begin{pmatrix}1\\
\vdots\\
\theta^{n-1}
\end{pmatrix}.
\]
In particular, if 
\[
f(x)=x^{n}+k_{n-1}x^{n-1}+\dots+k_{1}x+k_{0}\in A[x]
\]
denotes the minimal polynomial of $\theta$ over $K$, then
\[
T(\theta)=\begin{pNiceMatrix}[xdots/shorten=0.4cm]
0 & 1 & 0 & \Cdots & 0\\
\Vdots & \Ddots & \Ddots & \Ddots & \Vdots\\
\Vdots &  & \Ddots & \Ddots & 0\\
0 & \Cdots & \Cdots & 0 & 1\\
-k_{0} & -k_{1} &  & \Cdots & -k_{n-1}
\end{pNiceMatrix}.
\]
Let $I$
be the kernel of the surjective homomorphism $A[x]\rightarrow A[\theta]$,
$x\mapsto\theta$. Even though $A[x]$ is not necessarily a PID, it is true that $I$ is principal. Indeed, let $g(x)\in I$ be a non-zero element. Then,
as $f(x)$ is the minimal polynomial of $\theta$ over $K$, we have $g(x)=f(x)h(x)$,
for some monic $h(x)\in K[x]$. Since $A$ is a PID, there is a $d\in A$
such that $dh(x)\in A[x]$ and $c(dh)=(1)$, where $c(\cdot)$ denotes
the content of a polynomial in $A[x]$. By Gauss's lemma, $d\cdot c(g)=c(f)c(dh)=(1)$,
so $d$ is a unit and $h(x)\in A[x]$. Thus $I=(f(x))$ and we have an isomorphism
\[
A[x]/(f(x))\cong A[\theta].
\]

The Latimer--MacDuffee correspondence is a bijection
between similarity classes of matrices in $\M_{n}(\Z)$ with characteristic polynomial $f(x)$ and ideal
classes of the order $\Z[\theta]$. It was first established by Latimer
and MacDuffee \cite{Latimer-MacDuffee} and later given a new simpler
proof by Taussky (see \cite{Taussky-Latimer-MacDuffee}, \cite[Chapter~III, Section~16]{Newman}
and the last appendix of \cite{Cohn-Taussky}). As shown by Bender
\cite{Bender} and Buccino \cite{Buccino}, Taussky's proof translates
readily to the more general situation of similarity classes of matrices
over an integral domain $R$ and classes of ideals that are free $R$-modules
of rank $n$. More precisely, Lemma~\ref{lem:Every ideal free rank n}
together with \cite[Theorem~1]{Bender} imply:
\begin{thm}
There is a canonical bijection between the similarity classes of matrices
in $\M_{n}(A)$ with characteristic polynomial $f(x)$ and ideal classes
in $A[\theta]$.
\end{thm}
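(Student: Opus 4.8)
The plan is to follow Taussky's module-theoretic proof of the Latimer--MacDuffee correspondence (see \cite{Taussky-Latimer-MacDuffee}), which goes through essentially verbatim once one knows that ideals of $A[\theta]$ are $A$-free of rank $n$. To a matrix $M\in\M_{n}(A)$ with characteristic polynomial $f(x)$ I attach the $A[\theta]$-module $V_{M}$ obtained by letting $x$ act on $A^{n}$ as $M$: since $f(x)$ is irreducible it is also the minimal polynomial of $M$, so $f(M)=0$ by Cayley--Hamilton and the $A[x]$-action factors through $A[x]/(f(x))\cong A[\theta]$. In the other direction, to a non-zero ideal $\mfb$ of $A[\theta]$ I attach the matrix of multiplication by $\theta$ on $\mfb$ in some $A$-basis; here Lemma~\ref{lem:Every ideal free rank n} enters, ensuring that such a basis exists and that the matrix lies in $\M_{n}(A)$, and the characteristic polynomial of this matrix is $f(x)$ because $\mfb\otimes_{A}K\cong L$ and multiplication by $\theta$ on $L$ has minimal, hence characteristic, polynomial $f(x)$.

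First I would record the two soft facts that reduce everything to bookkeeping. One: two matrices $M,M'\in\M_{n}(A)$ with characteristic polynomial $f(x)$ are $\GL_{n}(A)$-similar if and only if $V_{M}\cong V_{M'}$ as $A[\theta]$-modules, since a change of $A$-basis of $A^{n}$ is exactly a conjugation. Two: two non-zero ideals $\mfb,\mfb'$ of $A[\theta]$ lie in the same ideal class if and only if they are isomorphic as $A[\theta]$-modules; the forward direction is multiplication by some $\alpha\in L^{\times}$, and conversely an $A[\theta]$-linear isomorphism $\varphi\colon\mfb\to\mfb'$ extends to an $L$-linear automorphism of $L=\mfb\otimes_{A}K$, which is multiplication by $\alpha:=\varphi(b)/b$ for any non-zero $b\in\mfb$ (well defined because $b_{2}\varphi(b_{1})=\varphi(b_{2}b_{1})=\varphi(b_{1}b_{2})=b_{1}\varphi(b_{2})$ for $b_{1},b_{2}\in\mfb$), so that $\mfb'=\alpha\mfb$.

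Granting these, the theorem becomes the statement that $M\mapsto V_{M}$ induces a bijection between $\GL_{n}(A)$-similarity classes of matrices with characteristic polynomial $f(x)$ and isomorphism classes of $A[\theta]$-modules that are $A$-free of rank $n$, together with the claim that these are precisely the modules isomorphic to a non-zero ideal of $A[\theta]$. The first part is formal: $V_{M}$ is $A$-free of rank $n$ by construction, and any $A[\theta]$-module $V$ that is $A$-free of rank $n$ equals $V_{M}$ for the matrix describing the $\theta$-action in an $A$-basis. For the second part, given such a $V$, the natural map $V\to V\otimes_{A}K$ is injective and $V\otimes_{A}K$ is a one-dimensional $L$-vector space (because $f(x)$, being irreducible, is the minimal polynomial of $M$, so the $K[x]$-module $K^{n}$ is cyclic of the form $K[x]/(f(x))\cong L$), so fixing an $L$-isomorphism $V\otimes_{A}K\cong L$ realises $V$ as a non-zero, finitely generated $A[\theta]$-submodule of $L$; multiplying by a suitable non-zero $d\in A$ to clear denominators identifies $V$, as an $A[\theta]$-module, with a non-zero ideal of $A[\theta]$. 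Conversely a non-zero ideal is $A$-free of rank $n$ by Lemma~\ref{lem:Every ideal free rank n}.

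Finally I would check that the two assignments are mutually inverse by unwinding the definitions: if $\psi\colon V_{M}\to\mfb$ is an $A[\theta]$-module isomorphism and we take for the $A$-basis of $\mfb$ the images under $\psi$ of the standard basis of $A^{n}$, then the multiplication-by-$\theta$ matrix is $M$ again; canonicity is just the statement that none of this depends on the chosen bases or ideal representatives, which is what facts One and Two provide. The main obstacle is the identification in the previous paragraph of an abstract $A$-free rank-$n$ $A[\theta]$-module with an honest ideal --- that is, the clearing-of-denominators step together with the crucial input of Lemma~\ref{lem:Every ideal free rank n} that ideals of $A[\theta]$ are $A$-free, which is the only place the PID hypothesis on $A$ is used. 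Alternatively, once that lemma is in hand, one may simply quote \cite[Theorem~1]{Bender}, where exactly this correspondence is established for ideals over an integral domain that are free modules of the appropriate rank.
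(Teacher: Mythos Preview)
Your argument is correct and aligns with the paper's treatment: the paper simply invokes Lemma~\ref{lem:Every ideal free rank n} together with \cite[Theorem~1]{Bender}, and what you have written is precisely an unpacking of Bender/Taussky's module-theoretic proof, ending with the same citation. The only cosmetic difference is that the paper describes the map $M\mapsto[\mfb]$ via an eigenvector $v\in A[\theta]^{n}$ with $Mv=\theta v$ and $\mfb=Av_{1}+\cdots+Av_{n}$, which is exactly your isomorphism $V_{M}\cong\mfb$ written in coordinates.
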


The bijection is canonical in the following sense. Let $\alpha\in\M_{n}(A)$
have characteristic polynomial $f(x)$. Then $\theta$ is an eigenvalue of $\alpha$
and since $f(x)$ is irreducible and we have assumed that $L/K$ is separable, all the eigenvalues (in some algebraic closure $\bar{K}$ of $K$) are distinct
so the eigenspaces have dimension $1$ over $\bar{K}$
and thus
two eigenvectors corresponding to the same eigenvalue $\theta$ must
be proportional (by a factor in $\bar{K}$). Since $\alpha$ has an eigenvector 
in $K(\theta)^{n}$ corresponding to $\theta$, it also has an eigenvector $v=\begin{pmatrix}v_{1}\\
\vdots\\
v_{n}
\end{pmatrix}$ in $A[\theta]^{n}$ (by clearing denominators). Let $I=Av_{1}+\dots+Av_{n} \subseteq A[\theta]$.
Then $\alpha v=\theta v$ implies that $\theta I\subseteq I$ and
thus $\theta^{i}I\subseteq I$ for every $i\geq1$, which implies
that $I$ is an ideal of $A[\theta]$. It is then easy to see that
any other choice of eigenvector $v$ leads to an ideal that is in
the same ideal class as $I$ and that any other matrix in the similarity
class of $\alpha$ leads to the same ideal class (see \cite[Chapter~III, Section~16]{Newman}).
\begin{defn}
A matrix $\kappa\in\M_{n}(A)$ is called an \emph{ideal matrix} for
an ideal $\mfb$ of $A[\theta]$ (with respect to the basis $1,\theta,\dots,\theta^{n-1}$
of $A[\theta]$) if 
\[
\M_{n}(A)T(\mfb)=\M_{n}(A)\kappa.
\]
Such a $\kappa$ always exists because left ideals in $\M_{n}(A)$
are principal \cite[Theorem~II.5]{Newman}.

The following characterisation is taken as the definition of ideal
matrix in \cite{Taussky-Ideal_matrices_I} and \cite{Cohn-Taussky}.
The result is essentially a theorem of MacDuffee \cite{MacDuffee-1931},
which we here translate into our notation.
\end{defn}

\begin{lem}
\label{lem:equivalence-ideal-matrix}Let $\mfb$ be a non-zero ideal
of $A[\theta]$ and $\kappa\in\M_{n}(A)$. Then the following two conditions are equivalent.
\begin{enumerate}
    \item 
$\kappa$ is an ideal matrix
for $\mfb$ (with respect to the basis $1,\theta,\dots,\theta^{n-1}$
of $A[\theta]$), 
\item
the entries of
\[
\kappa\begin{pmatrix}1\\
\vdots\\
\theta^{n-1}
\end{pmatrix}
\]
 form an $A$-basis for $\mfb$.
 \end{enumerate}
\end{lem}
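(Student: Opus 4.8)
The plan is to unwind both conditions into statements about the $A$-submodule of $A[\theta]$ generated by the entries of the column vector $\kappa\,(1,\theta,\dots,\theta^{n-1})^{\mathsf T}$, using the identification of $A[\theta]$ with $A^n$ via the basis $1,\theta,\dots,\theta^{n-1}$ and the fact that left ideals of $\M_n(A)$ are principal. Let me write $\mathbf{v}=(1,\theta,\dots,\theta^{n-1})^{\mathsf T}$ and, for a matrix $M\in\M_n(A)$, let $\rho(M)\subseteq A[\theta]$ denote the $A$-span of the entries of $M\mathbf{v}$; equivalently, identifying $A[\theta]$ with column vectors via $\mathbf v$, $\rho(M)$ is the row span of $M$. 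First I would record the key compatibility: for $x\in A[\theta]$, the product $T(x)M$ satisfies $\rho(T(x)M)=x\cdot\rho(M)$, because $T(x)M\mathbf v=M\,T(x)^{\mathsf T}$-type manipulations — more cleanly, each entry of $(T(x)M)\mathbf v$ is obtained from the entries of $M\mathbf v$ by the $A$-linear operations encoding multiplication by $x$, so the $A$-span gets multiplied by $x$ (one should be slightly careful here and instead argue: the entries of $T(x)\mathbf v$ are $x,x\theta,\dots,x\theta^{n-1}$, and then $\M_n(A)T(x)\mathbf v$ has row span $x A[\theta]$; the general statement $\rho(MN)\supseteq$, etc., follows from $MN\mathbf v$). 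In particular $\rho(T(x))=xA[\theta]$, so $\rho(\M_n(A)T(\mfb))$, the $A$-span of all entries of all $M T(b)\mathbf v$ for $M\in\M_n(A)$, $b\in\mfb$, is exactly $\mfb$ (here use that $\mfb$ is an ideal, so closed under multiplication by $A[\theta]$, and that $1$ appears, so we recover all of $\mfb$ and nothing more).

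Next I would show that for any $M\in\M_n(A)$, $\rho(\M_n(A)M)=\rho(M)$: writing $M=\sum_{i,j}E_{ij}\cdot(\text{scalar})$ and noting that multiplying $M$ on the left by elementary matrices permutes and takes $A$-linear combinations of the rows, the $A$-span of the entries of $(E M)\mathbf v$ over all $E$ is the same $A$-module $\rho(M)$, since row operations preserve the row span as an $A$-module. Combining the two observations: if $\kappa$ is an ideal matrix for $\mfb$, i.e. $\M_n(A)T(\mfb)=\M_n(A)\kappa$, then applying $\rho$ to both sides gives $\mfb=\rho(\M_n(A)T(\mfb))=\rho(\M_n(A)\kappa)=\rho(\kappa)$, which is precisely the statement that the entries of $\kappa\mathbf v$ span $\mfb$ over $A$. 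Since $\mfb$ is free of rank $n$ over $A$ by Lemma~\ref{lem:Every ideal free rank n} and $\kappa\mathbf v$ has exactly $n$ entries, "span" upgrades to "basis", giving (1)$\Rightarrow$(2).

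For the converse (2)$\Rightarrow$(1), suppose the $n$ entries of $\kappa\mathbf v$ form an $A$-basis of $\mfb$; then $\rho(\kappa)=\mfb$. By the displayed computations, $\rho(\M_n(A)\kappa)=\rho(\kappa)=\mfb=\rho(\M_n(A)T(\mfb))$, and both $\M_n(A)\kappa$ and $\M_n(A)T(\mfb)$ are left ideals of $\M_n(A)$. The final step is to argue that a left ideal $\mathfrak L$ of $\M_n(A)$ is determined by $\rho(\mathfrak L)$ — this is where I expect the only real content of the argument to sit. One way: $\mathfrak L\mapsto\rho(\mathfrak L)$ is an inclusion-preserving bijection between left ideals of $\M_n(A)$ and $A[\theta]$-submodules (i.e. ideals) of $A[\theta]$ once we also know $\rho(T(x)\mathfrak L)=x\rho(\mathfrak L)$; alternatively, and more concretely, use that left ideals of $\M_n(A)$ are principal, write $\M_n(A)\kappa=\M_n(A)\lambda$ for some ideal matrix $\lambda$ of $\mfb$ (which exists), deduce $\rho(\lambda)=\mfb=\rho(\kappa)$ from the already-proved direction, and then conclude $\M_n(A)\kappa=\M_n(A)\lambda$ from the fact that for $M,N\in\M_n(A)$ with $M$ (say) of full rank, $\rho(M)=\rho(N)$ together with $\rho(M)$ free of rank $n$ forces $N=PM$ for an invertible $P$ — i.e. a change-of-basis matrix between two bases of the same free module. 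I would prefer the concrete route, as it keeps the one nontrivial point ("two matrices with the same full-rank row-span generate the same left ideal") isolated and elementary, reducing it to linear algebra over the PID $A$.
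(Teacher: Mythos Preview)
Your argument is correct, and it takes a genuinely different route from the paper's. The paper proves $(2)\Rightarrow(1)$ first, by an explicit MacDuffee-style computation: writing $a_i=\sum_j k_{ij}\theta^{j-1}$ for the basis entries and expanding $\theta^{r-1}a_i$ in two ways, it exhibits matrices $Q_i,P_j\in\M_n(A)$ with $T(a_i)=Q_i\kappa$ and $\kappa=\sum_jP_jT(a_j)$, so that $\kappa$ is a greatest common right divisor of the $T(a_i)$ and hence generates $\M_n(A)T(\mfb)$. It then deduces $(1)\Rightarrow(2)$ by comparing $\kappa$ with the matrix $\kappa'$ built from a chosen basis and invoking the direction just proved. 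You invert this logic: your row-span map $\rho$ gives $(1)\Rightarrow(2)$ directly from $\rho(\M_n(A)\kappa)=\rho(\kappa)$ and $\rho(\M_n(A)T(\mfb))=\mfb$ (plus the fact that $n$ generators of a free rank-$n$ module form a basis), and then you bootstrap $(2)\Rightarrow(1)$ by picking \emph{some} ideal matrix $\lambda$, applying the proved direction to it, and matching bases. Your approach is more conceptual and avoids all the index-chasing; the paper's approach is more self-contained in that it never appeals to the abstract existence of an ideal matrix (it constructs one), and it makes the GCRD structure visible, which is closer to the classical literature it is translating. Two small presentational points: the aside about ``$T(x)M\mathbf v=M\,T(x)^{\mathsf T}$-type manipulations'' is a red herring and should simply be dropped in favour of the clean statement $T(b)\mathbf v=b\mathbf v$; and in your $(2)\Rightarrow(1)$ paragraph, the phrase ``write $\M_n(A)\kappa=\M_n(A)\lambda$'' should read ``let $\lambda$ be an ideal matrix for $\mfb$ and aim to show $\M_n(A)\kappa=\M_n(A)\lambda$''.
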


\begin{proof}
Let $a_{1},\dots,a_{n}$ be an $A$-basis for $\mfb$. In \cite{MacDuffee-1931},
the elements $\alpha_{1},\dots,\alpha_{k}$ are arbitrary generators
of the ideal. We may thus take $k=n$ and $\alpha_{i}=a_{i}$. Recall that
we have embedded $A[\theta]$ in $\M_{n}(A)$ via the regular embedding $T$.

Assume that the second condition holds
and write $\kappa=(k_{ij})$, $k_{ij}\in A$, so that 
\begin{equation}
a_{i}=\sum_{j=1}^{n}k_{ij}\theta^{j-1}.\label{eq:a_i-k_ij-theta}
\end{equation}
For any $1\leq i,j\leq n$, write
\begin{equation}
\theta^{i-1}\theta^{j-1}=\sum_{k=1}^{n}c_{ijk}\theta^{k-1},\label{eq:theta-i-theta-j-c_ijk}
\end{equation}
for some $c_{ijk}\in A$. We then have, for each fixed $j$, 
\begin{equation}
(c_{ijk})_{ik}=T(\theta^{j-1})\label{eq:c_ijk-theta}
\end{equation}
(note that $(c_{ijk})_{ik}=M_{j}$ in the notation of \cite{MacDuffee-1931}).
For any $r,i=1,\dots,n$ we then have
\[
\theta^{r-1}a_{i}=\sum_{j=1}^{n}k_{ij}\theta^{r-1}\theta^{j-1}=\sum_{j=1}^{n}k_{ij}\sum_{k=1}^{n}c_{rjk}\theta^{k-1}=\sum_{k=1}^{n}\Big(\sum_{j=1}^{n}k_{ij}c_{rjk}\Big)\theta^{k-1}.
\]
On the other hand, since $a_{1},\dots,a_{n}$ is a basis for $\mfb$,
there exist uniquely determined elements $q_{rij}\in A$ such that
\[
\theta^{r-1}a_{i}=\sum_{j=1}^{n}q_{rij}a_{j}=\sum_{j=1}^{n}q_{rij}\sum_{k=1}^{n}k_{jk}\theta^{k-1}=\sum_{k=1}^{n}\big(\sum_{j=1}^{n}q_{rij}k_{jk}\Big)\theta^{k-1}.
\]
From this we conclude that for any $r,i,k=1,\dots,n$, we have
\begin{equation}
\sum_{j=1}^{n}k_{ij}c_{rjk}=\sum_{j=1}^{n}q_{rij}k_{jk}.\label{eq:matrix-coeffs}
\end{equation}
Letting $Q_{i}=(q_{ris})_{rs}\in\M_{n}(A)$, for each $i$, the relations
(\ref{eq:a_i-k_ij-theta}) and (\ref{eq:c_ijk-theta}) imply that
(\ref{eq:matrix-coeffs}) can be rewritten as 
\[
T(a_{i})=Q_{i}\kappa.
\]
Thus $\kappa$ is a common right divisor of the matrices $T(a_{1}),\dots,T(a_{n})$.

Moreover, writing 
\[
a_{r}=\sum_{j=1}^{n}\delta_{rj}a_{j},
\]
for any $r=1,\dots,n$ (where $\delta_{ij}\in A$ is the Kronecker
delta) and 
\[
\delta_{rj}=\sum_{h=1}^{n}p_{rjh}\theta^{h-1},
\]
with $p_{rj1}=\delta_{rj}$ and $p_{rjh}=0$ if $h>1$, we get
\begin{align*}
\sum_{s=1}^{n}k_{rs}\theta^{s-1} & \overset{\eqref{eq:a_i-k_ij-theta}}{{=}}a_{r}=\sum_{j=1}^{n}\Big(\sum_{h=1}^{n}p_{rjh}\theta^{h-1}\Big)a_{j}=\sum_{j=1}^{n}\Big(\sum_{h=1}^{n}p_{rjh}\theta^{h-1}\Big)\Big(\sum_{l=1}^{n}k_{jl}\theta^{l-1}\Big)\\
 & \overset{\eqref{eq:theta-i-theta-j-c_ijk}}{=}\sum_{j,h,l=1}^{n}\sum_{s=1}^{n}p_{rjh}k_{jl}c_{hls}\theta^{s-1}.
\end{align*}
This implies that for each $r$ and $s$,
\[
k_{rs}=\sum_{j,h,l=1}^{n}p_{rjh}k_{jl}c_{hls},
\]
which, using (\ref{eq:a_i-k_ij-theta}) and (\ref{eq:c_ijk-theta}),
can be written
\[
\kappa=\sum_{j=1}^{n}P_{j}T(a_{j}),
\]
where $P_{j}=(p_{rjh})_{rh}$ (which equals the elementary matrix
with a $1$ in the $(j,1)$-position and has $0$s everywhere else).
Thus any common right divisor of $T(a_{1}),\dots,T(a_{n})$ in $\M_n(A)$ is a right divisor
of $\kappa$, so $\kappa$ is a greatest common right divisor and
therefore
\[
\M_{n}(A)T(\mfb)=\M_{n}(A)T(a_{1})+\dots+\M_{n}(A)T(a_{n})=\M_{n}(A)\kappa,
\]
that is, $\kappa$ is an ideal matrix for $\mfb$.

Conversely, assume that $\M_{n}(A)T(\mfb)=\M_{n}(A)\kappa$, so that
$\kappa$ is a greatest common right divisor of $T(a_{1}),\dots,T(a_{n})$.
Note that $\det(\kappa)\neq0$, or else we would have $\det(T(\mfb))=0$,
which implies that $\mfb=0$. This follows from the commutative diagram$$
\begin{tikzcd} 
L\arrow[hook,r] & \M_n(K)\\
A[\theta]\arrow[hook,r, "T"]\arrow[hook,u] & \M_n(A)\arrow[hook,u]
\end{tikzcd},
$$
where the top arrow is the regular embedding of $L$
with respect to the $K$-basis $1,\dots,\theta^{n-1}$; note that the image in $\M_{n}(K)$
of any non-zero element of $L$ is invertible. 

Let $\kappa'\in\M_{n}(A)$ be the (uniquely determined) matrix such
that 
\[
\kappa'\begin{pmatrix}1\\
\vdots\\
\theta^{n-1}
\end{pmatrix}=\begin{pmatrix}a_{1}\\
\vdots\\
a_{n}
\end{pmatrix}.
\]
We have shown above that $\kappa'$ must be a greatest common right
divisor of $T(a_{1}),\dots,T(a_{n})$, so, as $\kappa$ is invertible in
$\M_{n}(K)$, there exists an invertible matrix $\omega\in\GL_{n}(A)$
such that $\kappa=\omega\kappa'$. Thus the entries of 
\[
\kappa\begin{pmatrix}1\\
\vdots\\
\theta^{n-1}
\end{pmatrix}=\omega\begin{pmatrix}a_{1}\\
\vdots\\
a_{n}
\end{pmatrix}
\]
form an $A$-basis for $\mfb$.
\end{proof}
\begin{lem}
\label{lem:class contains ktk^-1}Let $\mfb$ be a non-zero ideal
of $A[\theta]$ and $\kappa$ an ideal matrix for $\mfb$ (with respect
to the basis $1,\theta,\dots,\theta^{n-1}$ of $A[\theta]$). The
similarity class in $\M_{n}(A)$ corresponding to the ideal class
of $\mfb$ under the Latimer--MacDuffee correspondence contains $\kappa T(\theta)\kappa^{-1}$.
\end{lem}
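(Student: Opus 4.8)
The plan is to use the characterisation of ideal matrices from Lemma~\ref{lem:equivalence-ideal-matrix} together with the explicit construction underlying the Latimer--MacDuffee correspondence. Recall that, given a matrix $\alpha\in\M_n(A)$ with characteristic polynomial $f(x)$, the correspondence attaches to its similarity class the ideal class of $I=Av_1+\dots+Av_n$, where $v=(v_1,\dots,v_n)^{\mathrm t}\in A[\theta]^n$ is an eigenvector of $\alpha$ for the eigenvalue $\theta$. So it suffices to exhibit, for $\alpha=\kappa T(\theta)\kappa^{-1}$, an eigenvector for $\theta$ whose coordinates generate an ideal in the class of $\mfb$.

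First I would observe that $T(\theta)$ has the column vector $w=(1,\theta,\dots,\theta^{n-1})^{\mathrm t}$ as an eigenvector for $\theta$: this is immediate from the defining property $T(x)w=xw$ of the regular embedding $T$ applied to $x=\theta$. Consequently $\kappa w$ is an eigenvector of $\kappa T(\theta)\kappa^{-1}$ for the same eigenvalue $\theta$, since $(\kappa T(\theta)\kappa^{-1})(\kappa w)=\kappa T(\theta)w=\theta\,\kappa w$. Note that $\kappa w\in A[\theta]^n$ because $\kappa\in\M_n(A)$ and each $\theta^{i}\in A[\theta]$, so this is a legitimate choice of eigenvector in $A[\theta]^n$ for the construction of the correspondence. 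Here I should also check that $\kappa w\neq 0$; this follows since $\kappa$ is invertible over $K$ (as $\det\kappa\neq 0$, because $\mfb\neq 0$ forces $\det T(\mfb)\neq 0$ exactly as in the proof of Lemma~\ref{lem:equivalence-ideal-matrix}) and $w\neq 0$.

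Next, the ideal associated to the similarity class of $\kappa T(\theta)\kappa^{-1}$ via this eigenvector is the $A$-module generated by the coordinates of $\kappa w$, i.e.\ by the entries of
\[
\kappa\begin{pmatrix}1\\ \vdots\\ \theta^{n-1}\end{pmatrix}.
\]
By Lemma~\ref{lem:equivalence-ideal-matrix}, since $\kappa$ is an ideal matrix for $\mfb$, these entries form an $A$-basis of $\mfb$; in particular the $A$-module they generate is exactly $\mfb$. Hence the similarity class corresponding to the ideal class of $\mfb$ contains $\kappa T(\theta)\kappa^{-1}$, as claimed.

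I do not expect a serious obstacle here: the content is essentially the compatibility between the two descriptions of the correspondence (via eigenvectors and via ideal matrices), and the only points requiring a word of care are the non-vanishing of $\kappa w$ and the fact that $\kappa w$ genuinely lies in $A[\theta]^n$ so that it is an admissible eigenvector in the construction. A minor caveat is that one must invoke the well-definedness already noted in the text, namely that the ideal class obtained is independent of the choice of eigenvector; but since we have produced one valid eigenvector yielding the ideal $\mfb$ on the nose, this is not even needed beyond knowing that the construction applies.
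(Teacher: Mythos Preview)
Your argument is essentially the paper's proof run in the opposite direction: the paper starts from a representative $\alpha$ of the similarity class, uses Lemma~\ref{lem:equivalence-ideal-matrix} to write its eigenvector as $\omega\kappa(1,\dots,\theta^{n-1})^{\mathrm t}$ with $\omega\in\GL_n(A)$, and deduces $\alpha=\omega\,\kappa T(\theta)\kappa^{-1}\,\omega^{-1}$. You instead start from $\kappa T(\theta)\kappa^{-1}$ and feed the eigenvector $\kappa w$ back through the Latimer--MacDuffee construction. Both routes hinge on the same two ingredients, so they are morally the same proof.

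There is, however, one genuine gap in your version. Before you can apply the Latimer--MacDuffee correspondence to $\kappa T(\theta)\kappa^{-1}$, you must know that this matrix lies in $\M_n(A)$, not merely in $\M_n(K)$; a priori $\kappa^{-1}$ only has entries in $K$. The paper's direction sidesteps this issue, since there $\kappa T(\theta)\kappa^{-1}=\omega^{-1}\alpha\omega$ with $\omega\in\GL_n(A)$ and $\alpha\in\M_n(A)$, so integrality comes for free. In your direction you need an extra line: by Lemma~\ref{lem:equivalence-ideal-matrix} the entries of $\kappa w$ form an $A$-basis of $\mfb$, and since $\theta\mfb\subseteq\mfb$ there is a (unique) matrix $M\in\M_n(A)$ with $M(\kappa w)=\theta(\kappa w)$; comparing with $(\kappa T(\theta)\kappa^{-1})(\kappa w)=\theta(\kappa w)$ and using that the entries of $\kappa w$ are $K$-linearly independent gives $\kappa T(\theta)\kappa^{-1}=M\in\M_n(A)$. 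With this sentence added, your proof is complete and equivalent to the paper's.
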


\begin{proof}
Let $\alpha\in\M_{n}(A)$ be a representative of the similarity class corresponding
to the ideal class of $\mfb$. Then 
\[
\alpha v=\theta v,
\]
where the entries $v_{1},\dots,v_{n}\in A[\theta]$
of $v$ span $\mfb$ over $A$. Since $\mfb$ is free of rank $n$
over $A$ (Lemma~\ref{lem:Every ideal free rank n}), the elements
$v_{1},\dots,v_{n}$ must be linearly independent, and thus form a
basis for the ideal $\mfb$. By Lemma~\ref{lem:equivalence-ideal-matrix}
there is an $\omega\in\GL_{n}(A)$ such that
\[
\omega\kappa\begin{pmatrix}1\\
\vdots\\
\theta^{n-1}
\end{pmatrix}=v,
\]
and therefore 
\[
(\omega\kappa)^{-1}\alpha\omega\kappa\begin{pmatrix}1\\
\vdots\\
\theta^{n-1}
\end{pmatrix}=\theta\begin{pmatrix}1\\
\vdots\\
\theta^{n-1}
\end{pmatrix}.
\]
This means that $(\omega\kappa)^{-1}\alpha\omega\kappa=T(\theta)$ and thus 
\[
\alpha=\omega\kappa T(\theta)\kappa^{-1}\omega^{-1},
\]
which lies in the same similarity class as $\kappa T(\theta)\kappa^{-1}$.
\end{proof}

\section{\label{sec:An-ideal-matrix for ideals of deg one}An ideal matrix
for ideals of degree one}
\begin{lem}
\label{lem:lambda-ideal-matrix}For any $z\in A$ the matrix
\[
\lambda=\begin{pNiceMatrix}[xdots/shorten=0.4cm]
-f(z) & 0 & \Cdots & \Cdots & 0\\
-z & 1 & \Ddots &  & \Vdots\\
-z^{2} & 0 & \Ddots & \Ddots & \Vdots\\
\vdots & \vdots & \Ddots & \Ddots & 0\\
-z^{n-1} & 0 & \Cdots & 0 & 1
\end{pNiceMatrix}
\]
is an ideal matrix for $(\theta-z)$, that is, $\M_{n}(A)T(\theta-z)=\M_{n}(A)\lambda$.
\end{lem}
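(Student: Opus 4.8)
The plan is to apply Lemma~\ref{lem:equivalence-ideal-matrix}: it suffices to show that the entries of the column vector obtained by multiplying $\lambda$ with $(1,\theta,\dots,\theta^{n-1})^{\mathrm{t}}$ form an $A$-basis of the ideal $(\theta-z)$. A direct multiplication shows that these entries are $-f(z)$ together with $\theta^{j}-z^{j}$ for $j=1,\dots,n-1$; let $M$ denote the $A$-submodule of $A[\theta]$ that they span. (If $\theta=z$, which can only happen when $n=1$, then both sides of the asserted equality are the zero ideal and there is nothing to prove, so we may assume $\theta\neq z$, i.e., $(\theta-z)\neq0$.)

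First I would check the easy inclusion $M\subseteq(\theta-z)$. Each $\theta^{j}-z^{j}$ is visibly divisible by $\theta-z$ in $A[\theta]$; and writing $f(x)-f(z)=(x-z)g(x)$ with $g(x)\in A[x]$ (polynomial division by the monic polynomial $x-z$ stays in $A[x]$) and substituting $x=\theta$ gives $-f(z)=(\theta-z)g(\theta)\in(\theta-z)$, using $f(\theta)=0$.

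For the reverse inclusion $(\theta-z)\subseteq M$, I would note that, since $1,\theta,\dots,\theta^{n-1}$ span $A[\theta]$ over $A$, the ideal $(\theta-z)$ is spanned over $A$ by the elements $w_{j}:=\theta^{j}(\theta-z)=\theta^{j+1}-z\theta^{j}$ for $j=0,\dots,n-1$. For $0\le j\le n-2$ one has $w_{j}=(\theta^{j+1}-z^{j+1})-z(\theta^{j}-z^{j})\in M$ directly (with $\theta^{0}-z^{0}=0$ when $j=0$). The one case that actually uses the first generator $-f(z)$ is $j=n-1$: combining the relation $\theta^{n}=-\sum_{i=0}^{n-1}k_{i}\theta^{i}$ coming from the minimal polynomial with $f(z)=z^{n}+\sum_{i=0}^{n-1}k_{i}z^{i}$, a short computation gives $\sum_{i=1}^{n-1}k_{i}(\theta^{i}-z^{i})=-(\theta^{n}-z^{n})-f(z)$, so that $\theta^{n}-z^{n}\in M$, and then $w_{n-1}=(\theta^{n}-z^{n})-z(\theta^{n-1}-z^{n-1})\in M$ as well. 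Hence $(\theta-z)=M$.

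Finally, by Lemma~\ref{lem:Every ideal free rank n} the nonzero ideal $(\theta-z)$ is free of rank $n$ over $A$, so the $n$ generators $-f(z),\theta-z,\dots,\theta^{n-1}-z^{n-1}$ of $M=(\theta-z)$ necessarily form an $A$-basis, and Lemma~\ref{lem:equivalence-ideal-matrix} then gives $\M_{n}(A)T(\theta-z)=\M_{n}(A)\lambda$. I do not expect any genuine obstacle; the only step needing care is the top-degree term $w_{n-1}$, and the point is precisely that the first generator has been chosen to be $-f(z)$ so that the minimal-polynomial relation collapses into membership in $M$.
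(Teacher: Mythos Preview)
Your argument is correct, but it follows a genuinely different route from the paper's. The paper proves the lemma by performing explicit elementary row operations on $T(\theta-z)$: a cyclic row permutation followed by sweeps using the $1$s on the diagonal reduce $T(\theta-z)$ to a lower-triangular matrix whose $(1,1)$-entry is identified as $-f(z)$ by a determinant computation; since elementary row operations are left multiplication by elements of $\GL_n(A)$, this immediately gives $\M_n(A)T(\theta-z)=\M_n(A)\lambda$. Your proof instead appeals to Lemma~\ref{lem:equivalence-ideal-matrix}: you compute that $\lambda(1,\theta,\dots,\theta^{n-1})^{\mathrm t}$ has entries $-f(z),\theta-z,\dots,\theta^{n-1}-z^{n-1}$, show these span $(\theta-z)$ over $A$ (the key being that the minimal-polynomial relation forces $\theta^n-z^n\in M$ once $-f(z)$ is included), and conclude via Lemma~\ref{lem:Every ideal free rank n} that $n$ generators of a rank-$n$ free $A$-module must be a basis. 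Your approach is more conceptual and makes transparent \emph{why} the first column of $\lambda$ carries exactly $-f(z)$; the paper's approach is a self-contained matrix calculation that does not need Lemmas~\ref{lem:Every ideal free rank n} and~\ref{lem:equivalence-ideal-matrix} as inputs.
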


\begin{proof}
We may perform successive elementary row operations on $T(\theta-z)$. The first operation consists of the row permutation  $(1,2,3,\dots,n)$:
\[T(\theta-z)=\begin{pNiceMatrix}[xdots/shorten=0.5cm]
-z & 1 & 0 & \Cdots & 0\\
0 & \Ddots[draw-first, shorten=0.3cm] & \Ddots & \Ddots & \vdots\\
\vdots & \Ddots & \Ddots & \Ddots & 0\\
0 & \Cdots \Cdots  & 0 & -z & 1\\
-k_{0} & -k_{1} & \Cdots & \Cdots & -k_{n}-z
\end{pNiceMatrix} \longmapsto \begin{pNiceMatrix}[xdots/shorten=0.5cm]
-k_{0} & -k_{1} & \Cdots & \Cdots & -k_{n}-z\\
-z & 1 & 0 & \Cdots & 0\\
0 & \Ddots[draw-first, shorten=0.3cm] & \Ddots & \Ddots & \vdots\\
\vdots & \Ddots & \Ddots & \Ddots & 0\\
0 & \Cdots \Cdots  & 0 & -z & 1
\end{pNiceMatrix}.
\]

Next, starting from the second column, successively use each of the
$1$s on the diagonal to clear the entry just below it. This yields
\begin{align*}
\begin{pNiceMatrix}[xdots/shorten=0.4cm]
-k_{0} & -k_{1} & \Cdots  & \Cdots \Cdots & \Cdots  & -k_{n}-z\\
-z & 1 & 0 & \Cdots & & 0\\
-z^{2} & 0 & 1 & \Ddots &  & \Vdots\\
0 & 0 & -z & 1 & \Ddots & \\
\vdots & \Ddots & \Ddots & \Ddots & \Ddots & 0\\
0 & \Cdots & 0  & 0 & -z & 1
\end{pNiceMatrix}\\
\longmapsto\begin{pNiceMatrix}[xdots/shorten=0.4cm] 
-k_{0} & -k_{1} & -k_{2} & \Cdots & \Cdots \Cdots & \Cdots & -k_{n}-z\\
-z & 1 & 0 & \Cdots &  & & 0\\
-z^{2} & 0 & 1 & \Ddots[draw-first] &  &  & \Vdots\\
-z^{3} & 0 & 0 & 1 &  & & \\
0 & 0 & 0 & -z & 1 &  & \\
\vdots & \Ddots & \Ddots & \Ddots & \Ddots & \Ddots & 0\\
0 & \Cdots & 0 & 0 & 0 & -z & 1
\end{pNiceMatrix}\\
\cdots\longmapsto\begin{pNiceMatrix}[xdots/shorten=0.4cm]
-k_{0} & -k_{1} & -k_{2} & \Cdots & -k_{n}-z\\
-z & 1 & 0 & \Cdots & 0\\
-z^{2} & 0 & \Ddots & \Ddots & \vdots\\
\vdots & \vdots & \Ddots & \Ddots & 0\\
-z^{n-1} & 0 & \Cdots & 0 & 1
\end{pNiceMatrix}.
\end{align*}
Finally, starting from the second column, successively use each of
the $1$s on the diagonal to clear the top row entry above it. The
resulting matrix is
\[
\lambda:=\begin{pNiceMatrix}[xdots/shorten=0.4cm]
g(z) & 0 & \Cdots & \Cdots & 0\\
-z & 1 & \Ddots &  &\\
-z^{2} & 0 & \Ddots & \Ddots[draw-first] & \Vdots\\
\vdots & \vdots & \Ddots & \Ddots & 0\\
-z^{n-1} & 0 & \Cdots & 0 & 1
\end{pNiceMatrix},
\]
for some polynomial $g(x)\in A[x]$. But since 
\begin{align*}
g(z) & =\det\lambda=(-1)^{\sgn(n,\dots,1)}\det(T(\theta-z))\\
 & =(-1)^{n-1}(-1)^{n}f(z)=-f(z),
\end{align*}
the result follows.
\end{proof}
For any $a,z\in A$, define the matrix
\begin{equation}
\kappa=\kappa(a,z)=\begin{pNiceMatrix}[xdots/shorten=0.4cm]
a & 0 & \Cdots &  & 0\\
-z & 1 & \Ddots &  & \Vdots\\
-z^{2} & 0 & \Ddots & \Ddots & \Vdots\\
\vdots & \vdots & \Ddots & \Ddots & 0\\
-z^{n-1} & 0 & \Cdots & 0 & 1
\end{pNiceMatrix} \label{eq:kappa}
\end{equation}

\begin{prop}
\label{prop:deg one - kappa ideal matrix}Assume that $\mfb$ is an
ideal of $A[\theta]$ of degree one and let $a$ be a generator of
$A\cap\mfb$. Then there exists a $z\in A$ such that $f(z)\equiv0\mod(a)$
and $\kappa=\kappa(a,z)$ is an ideal matrix for $\mfb$.
\end{prop}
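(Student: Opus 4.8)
The plan is to combine the structural description of degree-one ideals from Section~\ref{sec:Ideals-of-degree-one} with the explicit ideal matrices computed in Lemmas~\ref{lem:lambda-ideal-matrix} and~\ref{lem:equivalence-ideal-matrix}. First I would use Lemma~\ref{lem:deg1-equivalence}: since $\mfb$ is of degree one, there is a $z\in A$ with $\mfb=(\theta-z,\mfa)$, where $\mfa=A\cap\mfb$. Because $A$ is a PID, $\mfa=(a)$ for some $a\in A$, so $\mfb=(\theta-z,a)$. The congruence $f(z)\equiv 0\bmod(a)$ comes out for free: reducing $f(\theta)=0$ modulo $\mfb$ and using $\theta\equiv z$, we get $f(z)\in\mfb$, hence $f(z)\in A\cap\mfb=(a)$.

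Next I would identify an $A$-basis for $\mfb$ and read off the ideal matrix via Lemma~\ref{lem:equivalence-ideal-matrix}, which says $\kappa$ is an ideal matrix for $\mfb$ exactly when the entries of $\kappa(1,\theta,\dots,\theta^{n-1})^{t}$ form an $A$-basis of $\mfb$. So the core of the argument is to show that the $n$ elements obtained from the rows of $\kappa(a,z)$, namely
\[
a,\quad \theta-z,\quad \theta^{2}-z^{2},\quad\dots,\quad \theta^{n-1}-z^{n-1},
\]
form an $A$-basis of $\mfb=(\theta-z,a)$. That these lie in $\mfb$ is clear, since $\theta^{i}-z^{i}=(\theta-z)(\theta^{i-1}+z\theta^{i-2}+\cdots+z^{i-1})$ and $a\in\mfb$. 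For the reverse inclusion, I would note that the $A$-span of these elements contains $a$ and $\theta-z$, and then, by adding suitable $A$-multiples of lower-degree basis elements, contains $\theta^{i}-z^{i}+(\text{lower order in }\theta)=\theta^{i}+(\text{lower})$ — more precisely, one checks inductively that the $A$-span equals $Aa+A(\theta-z)+A(\theta-z)^{2}+\cdots$ is not quite it, so the cleanest route is: the $A$-span $M$ of those $n$ elements clearly contains $a\theta^{i}$ for all $i$ (multiply by appropriate combinations, using that $\theta-z$ generates together with the $\theta^{j}-z^{j}$) — actually the direct way is to observe $M\supseteq (a) \cdot A[\theta]$ is too strong; instead I would argue that $M$ is an $A$-submodule of $\mfb$ of rank $n$ (it is free of rank $n$ since the transition matrix $\kappa$ to $1,\theta,\dots,\theta^{n-1}$ has nonzero determinant $a$ up to sign) and that the index equals that of $\mfb$ in $A[\theta]$, namely $[A[\theta]:\mfb]=|A/(a)|$ by degree-oneness, so $M=\mfb$. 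The determinant computation $\det\kappa(a,z)=a$ (expanding along the first row as in Lemma~\ref{lem:lambda-ideal-matrix}) makes this precise even when $A/(a)$ is infinite, by localizing at each prime dividing $a$.

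The main obstacle is the index/rank comparison when $A/(a)$ need not be finite: I cannot literally count elements. The fix is to work prime-by-prime. Since $\mfb$ and $M$ both contain $a$, it suffices to show $M_{\mfp}=\mfb_{\mfp}$ for every prime $\mfp\mid a$ of $A$ (away from such $\mfp$ both localize to all of $A_{\mfp}[\theta]$, as $a$ becomes a unit — note $a\ne 0$ since $\mfb\ne 0$, which holds as $\mfb$ is proper and nonzero: properness is part of the hypothesis, and nonzero since $a = $ generator of the nonzero ideal $A\cap\mfb$; here one uses that $A\cap\mfb\ne 0$ because $\mfb$ contains a nonzero integer multiple of any nonzero element of $\mfb$, or invokes Lemma~\ref{lem:Every ideal free rank n}). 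Over the local ring $A_{\mfp}$, either $A_{\mfp}/\mfp$ is finite and the counting argument via $\det\kappa$ being a generator of $\mfp^{v_\mfp(a)}$ works directly, or one argues that $\mfb_{\mfp}/M_{\mfp}$ is a finitely generated torsion $A_{\mfp}$-module whose Fitting ideal is trivial because $\det\kappa$ generates $a A_\mfp = \mathrm{(index\ of\ }M_\mfp) = \mathrm{(index\ of\ }\mfb_\mfp)$; the equality of the latter two uses that $\mfb_{\mfp}=(\theta-z, a)$ has $A_{\mfp}[\theta]/\mfb_{\mfp}\cong A_{\mfp}/aA_{\mfp}$ by Lemma~\ref{lem:deg1-equivalence} and the degree-one property. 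I expect this localization bookkeeping to be the only genuinely delicate point; everything else is the determinant expansion (already done in Lemma~\ref{lem:lambda-ideal-matrix}) and the telescoping identity for $\theta^{i}-z^{i}$.
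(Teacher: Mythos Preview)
Your approach is correct but takes a genuinely different route from the paper's. Both start identically: invoke Lemma~\ref{lem:deg1-equivalence} to write $\mfb=(\theta-z,a)$ and deduce $f(z)\equiv 0\pmod a$. From there you use the \emph{basis} characterisation of ideal matrices (Lemma~\ref{lem:equivalence-ideal-matrix}): the rows of $\kappa(a,z)$ applied to $(1,\theta,\dots,\theta^{n-1})^{t}$ give $a,\theta-z,\dots,\theta^{n-1}-z^{n-1}$, and you argue these form an $A$-basis of $\mfb$ via an index comparison---since $\kappa$ is lower triangular with $\det\kappa=a$, the quotient $A[\theta]/M\cong A/(a)$, while degree-oneness gives $A[\theta]/\mfb\cong A/(a)$, so the natural surjection between these finite-length $A$-modules is an isomorphism and $M=\mfb$. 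The paper instead stays with the \emph{left-ideal} definition: it uses Lemma~\ref{lem:lambda-ideal-matrix} to get $\M_n(A)T(\theta-z)=\M_n(A)\lambda$, hence $\M_n(A)T(\mfb)=\M_n(A)\lambda+\M_n(A)a$, and then shows $\M_n(A)\lambda+\M_n(A)a=\M_n(A)\kappa$ by a short adjugate trick ($\kappa$ and $\lambda$ differ only in the $(1,1)$-entry, and $\mathrm{adj}(\kappa)\kappa=aI$). Your route bypasses Lemma~\ref{lem:lambda-ideal-matrix} altogether and is more conceptual, but it leans on module theory over a PID; your worry about $A/(a)$ being infinite is unnecessary, since for any nonzero $a$ in a PID the module $A/(a)$ has finite \emph{length}, which is all the surjection-implies-isomorphism step needs---no localisation or Fitting ideals are required. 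The paper's argument is pure matrix algebra and avoids that module theory, at the cost of the separate computation in Lemma~\ref{lem:lambda-ideal-matrix}.
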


\begin{proof}
By Lemma~\ref{lem:deg1-equivalence}, there exists a $z\in A$ such that $\mfb=(\theta-z,a)$.
Then $\theta\equiv z\mod\mfb$, so $f(z)\equiv f(\theta)\equiv0\mod\mfb$
and hence $f(z)\equiv0\mod(a)$, as $f(z)\in A$.
Thus
\[
\M_{n}(A)T(\mfb)=\M_{n}(A)T(\theta-z)+\M_{n}(A)a=\M_{n}(A)\lambda+\M_{n}(A)a,
\]
with $\lambda$ as in Lemma~\ref{lem:lambda-ideal-matrix}. Let $E_{ij}\in\M_{n}(A)$
be the matrix defined by having $(i,j)$-entry equal to $1$ and all
other entries equal to $0$. Writing $f(z)=am$, for $m\in A$, we
then have $\kappa=\lambda+E_{11}(1-m)a$ and hence
\[
\M_{n}(A)\kappa\subseteq\M_{n}(A)\lambda+\M_{n}(A)a.
\]
On the other hand, $aI\in\M_{n}(A)\kappa$ since the adjoint matrix
$\mathrm{adj}(\kappa)\in\M_{n}(A)$ satisfies $\mathrm{adj}(\kappa)\kappa=\det(\kappa)I=aI$.
Furthermore, $\lambda\in\M_{n}(A)\kappa$ since $\lambda=\kappa-E_{11}(1-m)a=(I-(1-m)E_{11}\mathrm{adj}(\kappa))\kappa$.
Thus $\M_{n}(A)\lambda+\M_{n}(A)a\subseteq\M_{n}(A)\kappa$ and so
$\M_{n}(A)T(\mfb)=\M_{n}(A)\lambda+\M_{n}(A)a=\M_{n}(A)\kappa$.
\end{proof}

\section{Representatives of similarity classes}
In the following lemma, note that $\kappa  =\kappa(a,z)$ in \eqref{eq:kappa} is not necessarily invertible in $\M_n(A)$ and that it is therefore not true in general that $\theta$ is similar (i.e., $\GL_n(A)$-conjugate) to $C_f(a,z)$.
\begin{lem}
\label{lem:C_f(a,z)} Let $a\in A$ and assume that there exists a
$z\in A$ such that $f(z)\equiv0\mod(a)$. Then the matrix $\kappa T(\theta)\kappa^{-1}\in\M_{n}(A)$
is similar to 
\[C_{f}(a,z)=\begin{pNiceMatrix}[xdots/shorten=0.4cm]
0 &  1 & 0 & \Cdots  & 0\\
\vdots & \Ddots &\Ddots & \Ddots& \vdots\\
0 & \Cdots &0 &1 & 0\\
u_{n-1} & u_{n-2} & \Cdots\Cdots & u_{1} & -f(z)a^{-1}\\
a & 0 & \Cdots \Cdots & 0 & z
\end{pNiceMatrix},
\]
where $u_{i}=-(z^{i}+k_{n-1}z^{i-1}+k_{n-2}z^{i-2}+\dots+k_{n-i+1}z+k_{n-i})$,
for $i=1,...,n-1$ with $k_i$ the coefficients of $f(x)$; see Section~\ref{sec:The-Latimer=002013MacDuffee-correspond}.
\end{lem}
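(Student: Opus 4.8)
The plan is to realise both $\kappa T(\theta)\kappa^{-1}$ and $C_f(a,z)$ as the matrix of one and the same $K$-linear map $L\to L$, namely multiplication by $\theta$, with respect to two different $K$-bases of $L$, and then to check that the matrix relating these bases lies in $\GL_n(A)$, not merely in $\GL_n(K)$; this last point is where the special shapes of the two matrices enter, and it is what makes the statement non-trivial in view of the remark preceding the lemma. We may assume $a\neq 0$ (otherwise the hypothesis gives $f(z)=0$, contradicting the irreducibility of $f$ when $n\geq 2$, and for $n=1$ the claim is trivial). First I would note that $\det\kappa=a$ by expansion along the first row, so $\kappa\in\GL_n(K)$; applying $\kappa$ to the column $(1,\theta,\dots,\theta^{n-1})$ gives the column $b$ with $b_1=a$ and $b_i=\theta^{i-1}-z^{i-1}$ for $2\leq i\leq n$, whose entries form a $K$-basis of $L$. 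Since $\kappa$ is the matrix expressing this basis in terms of the power basis and $T(\theta)$ is the matrix of multiplication by $\theta$ in the power basis, the change-of-basis formula shows that $\kappa T(\theta)\kappa^{-1}$ is the matrix of multiplication by $\theta$ in the basis $b_1,\dots,b_n$: its $(i,j)$-entry is the coefficient of $b_j$ in the expansion of $\theta b_i$.

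Next I would carry out the analogous analysis for $C_f(a,z)$. Put $c_i=\theta^i-z\theta^{i-1}$ for $1\leq i\leq n-1$ and $c_n=a$. Reading off the rows of $C_f(a,z)$, the statement that $C_f(a,z)$ sends the column $(c_1,\dots,c_n)$ to $\theta\,(c_1,\dots,c_n)$ amounts to the identities $\theta c_i=c_{i+1}$ for $1\leq i\leq n-2$ (immediate), $\theta c_n=ac_1+zc_n$ (immediate from $c_n=a$ and $c_1=\theta-z$), and $\theta c_{n-1}=\sum_{j=1}^{n-1}u_{n-j}c_j-f(z)a^{-1}c_n$. The last identity is proved by expanding $\theta c_{n-1}=\theta^n-z\theta^{n-1}$, using $f(\theta)=0$ to rewrite $\theta^n$, and re-expanding everything in the $c_j$, observing that $f(z)a^{-1}c_n=f(z)$ is a genuine element of $A[\theta]$ so that no denominators occur. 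I expect this to be the single step demanding real care, as one has to match the coefficients produced by reducing $\theta^n$ against the prescribed indexing of the $u_i$. Granting it, and that the $c_i$ form a $K$-basis (which follows once the matrix $P$ below is in hand, since the $b_i$ do), $C_f(a,z)$ is the matrix of multiplication by $\theta$ in the basis $c_1,\dots,c_n$.

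Finally I would relate the two bases. One checks the identities $b_1=c_n$ and $b_i=c_{i-1}+zc_{i-2}+\dots+z^{i-2}c_1$ for $2\leq i\leq n$; let $P$ be the matrix whose $i$-th row records the coefficients of $c_1,\dots,c_n$ in $b_i$, so that $P$ sends the column $(c_1,\dots,c_n)$ to $b$. All entries of $P$ are powers of $z$, $1$s, or $0$s, hence lie in $A$, and expanding $\det P$ along the first row shows $\det P=\pm 1$, so $P\in\GL_n(A)$. Comparing the two descriptions of multiplication by $\theta$ via the relation $b=P(c_1,\dots,c_n)$ and the $K$-linear independence of the entries of $b$ then yields $\kappa T(\theta)\kappa^{-1}=P\,C_f(a,z)\,P^{-1}$; this exhibits $\kappa T(\theta)\kappa^{-1}$ as an element of $\M_n(A)$ and gives the asserted similarity.
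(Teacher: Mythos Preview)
Your argument is correct and, once unwound, produces exactly the same conjugating matrix as the paper's proof: your $P^{-1}$ coincides with the product $v\tau$ the authors write down (one checks, using the paper's $\tau$---which has subdiagonal $(0,-z,\dots,-z)$, the first subdiagonal entry being zero---that row~$1$ of $v\tau$ is $e_2$, row~$i$ is $-ze_i+e_{i+1}$ for $2\le i\le n-1$, and row~$n$ is $e_1$, matching your description of $P^{-1}$). The difference is entirely one of presentation. The paper simply records $\tau$ and $v$ and asserts that a direct computation gives $v\tau(\kappa T(\theta)\kappa^{-1})\tau^{-1}v^{-1}=C_f(a,z)$, whereas you \emph{derive} the conjugating matrix by interpreting both $\kappa T(\theta)\kappa^{-1}$ and $C_f(a,z)$ as the matrix of multiplication by $\theta$ on $L$ with respect to the bases $(a,\theta-z,\theta^2-z^2,\dots)$ and $(\theta-z,\theta^2-z\theta,\dots,a)$, and then writing down the change-of-basis matrix. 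Your route has the advantage of explaining where $v$ and $\tau$ come from and of making it transparent that the conjugator lies in $\GL_n(A)$ without any computation of $\kappa T(\theta)\kappa^{-1}$; the paper's route is shorter to state but opaque. The one place you defer a computation---verifying that the $(n-1)$th row of $C_f(a,z)$ really records the coefficients of $\theta c_{n-1}=\theta^n-z\theta^{n-1}$ in the $c$-basis---is routine: substituting $\theta^n=-\sum k_i\theta^i$ and using $\theta^j=\sum_{m=1}^j z^{j-m}c_m+z^j$ gives coefficient $-(z^{n-m}+k_{n-1}z^{n-1-m}+\dots+k_m)=u_{n-m}$ in front of $c_m$ and constant term $-f(z)=-f(z)a^{-1}\cdot c_n$, exactly as required.
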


\begin{proof}
Define the following matrices in $\GL_{n}(A)$:
\begin{align*}
\tau & =\begin{pNiceMatrix}[xdots/shorten=0.4cm] 
1 & 0 & \Cdots &  & 0\\
0 & & \Ddots &  & \Vdots\\
0 & -z & \Ddots[draw-first] &  & \\
\vdots & \Ddots & \Ddots & & 0\\
0 & \Cdots & 0 & -z & 1
\end{pNiceMatrix},\qquad v=\begin{pNiceMatrix}[xdots/shorten=0.4cm]
0 & 1 & 0 & \Cdots & 0\\
\Vdots & \Ddots[draw-first] & \Ddots & \Ddots & \vdots\\
 & &  &  & 0\\
0 & &  & & 1\\
1 & 0 & \Cdots & & 0
\end{pNiceMatrix}.
\end{align*}
Then direct computation shows that
\[
v\tau(\kappa T(\theta)\kappa^{-1})\tau^{-1}v^{-1}=C_{f}(a,z). \qedhere 
\]
\end{proof}
\begin{rem*}
Matrices of the form $C_{f}(a,z)$ were first introduced by Ochoa
(see \cite{Ochoa-I} and \cite{Ochoa-II}) who called them ``$A$-matrices'',
presumably because of the shape of their non-zero entries. The matrix
$C_{f}(a,z)$ is similar to the matrix
\[
\leftexp{\bf a}{\theta}=\begin{pNiceMatrix}[xdots/shorten=0.4cm]
0 &  & \Cdots & 0 & u_{n-1} & -f(z)a^{-1}\\
1 & \Ddots &  & & u_{n-2} & 0\\
0 & \Ddots & \Ddots & \Vdots & \Vdots & \Vdots\\
\Vdots & \Ddots & \Ddots & 0 &  & \Vdots\\
\Vdots &  & \Ddots & 1 & u_{1} & 0\\
0 & \Cdots & \Cdots & 0 & a & z
\end{pNiceMatrix}
\]
 considered by Rehm in \cite{Rehm-Ochoa}, associated with an ideal $\mathbf{a}$ (which we have called $\mathfrak{b}$). The difference is that we use a more standard form of the companion matrix $T(\theta)$ and corresponding variants of $\lambda$ and $\kappa$, compared to Rehm, in order to obtain matrices $C_{f}(a,z)$ that agree with Ochoa's $A$-matrices. To see that $\leftexp{\bf a}{\theta}$ is similar to $C_{f}(a,z)$, let 
\[
\sigma=\begin{pNiceMatrix}[xdots/shorten=0.4cm]
1 & 0 & & \Cdots & \Cdots & & 0\\
0 & \Ddots &  &  &  &  & \\
 & k_{n} &  & \Ddots &  &  & \Vdots\\
\Vdots & k_{n-1} &  & \Ddots & \Ddots &  & \Vdots\\
\Vdots & \Vdots & \Ddots & \Ddots &  & & \\
 &  &  & \Ddots & \Ddots &  & 0\\
0 & k_{3} & & \Cdots & k_{n-1} & k_{n} & 1
\end{pNiceMatrix},\qquad w=\begin{pNiceMatrix}[xdots/shorten=0.4cm] &  & 1\\
 & \Iddots\\
1
\end{pNiceMatrix}.
\]
One then easily verifies that 
\[
w\sigma(\kappa T(\theta)\kappa^{-1})\sigma^{-1}w^{-1}=\leftexp{\bf a}{\theta}.
\]
\end{rem*}
We can now finish the proof of our main result which gives conditions for similarity classes to contain matrices of the form $C_f(a,z)$ from  Lemma~\ref{lem:C_f(a,z)}. Recall that from
Section~\ref{sec:The-Latimer=002013MacDuffee-correspond} onwards
we assumed that $A$ is a PID. We have also assumed throughout that
$f(x)$ is irreducible.
\begin{thm} \label{thm:main}
Let $\mfb$ be an ideal of $A[\theta]$ of degree one. The similarity
class in $\M_{n}(A)$ that corresponds to the ideal class of $\mfb$
under the Latimer--MacDuffee correspondence contains a matrix of
the form $C_{f}(a,z)$, where $a,z\in A$ and $f(z)\equiv0\mod(a)$.
In particular, if $A[\theta]=B$ is the maximal order, then every
similarity class in $\M_{n}(A)$ contains a matrix of this form. 
\end{thm}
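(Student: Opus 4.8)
The plan is to assemble the two-step argument outlined in the introduction from the pieces already established. The first step handles the general claim about an arbitrary ideal of degree one, and the second step specialises to the maximal order using Lenstra's result.

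First I would prove the main assertion. Given an ideal $\mfb$ of $A[\theta]$ of degree one, I apply Proposition~\ref{prop:deg one - kappa ideal matrix}: letting $a$ be a generator of the principal ideal $A\cap\mfb$ (principal since $A$ is a PID), there is a $z\in A$ with $f(z)\equiv0\mod(a)$ such that $\kappa=\kappa(a,z)$ is an ideal matrix for $\mfb$. By Lemma~\ref{lem:class contains ktk^-1}, the similarity class corresponding to the ideal class of $\mfb$ contains $\kappa T(\theta)\kappa^{-1}$. Finally, by Lemma~\ref{lem:C_f(a,z)}, the hypothesis $f(z)\equiv0\mod(a)$ guarantees that $\kappa T(\theta)\kappa^{-1}$ is similar to $C_{f}(a,z)$, so that matrix lies in the same similarity class. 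This proves the first statement.

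For the ``in particular'' clause, suppose $A[\theta]=B$ is the maximal order, so $B$ is a Dedekind domain. Let $\mathfrak{c}$ be an arbitrary ideal class of $B$. By Corollary~\ref{cor:every ideal class contains deg one} (which rests on Theorem~\ref{thm:Lenstra} via Proposition~\ref{prop:deg-one-characterisation}), $\mathfrak{c}$ contains an ideal $\mfb$ of degree one. Under the Latimer--MacDuffee correspondence (the Theorem of Section~\ref{sec:The-Latimer=002013MacDuffee-correspond}), every similarity class in $\M_{n}(A)$ with characteristic polynomial $f(x)$ corresponds to some ideal class of $B$; applying the first part of the theorem to this $\mfb$ shows that the corresponding similarity class contains a matrix $C_{f}(a,z)$. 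Hence every similarity class does.

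There is essentially no obstacle left at this stage: the entire difficulty has been front-loaded into Proposition~\ref{prop:deg one - kappa ideal matrix}, Lemma~\ref{lem:C_f(a,z)}, and especially Theorem~\ref{thm:Lenstra} and its corollary. The only point requiring a word of care is the bookkeeping around the characteristic polynomial: one must note that since $f(x)$ is the minimal polynomial of $\theta$ and $[L:K]=n$, it has degree $n$, and the correspondence is exactly between ideal classes of $B=A[\theta]$ and similarity classes of matrices in $\M_{n}(A)$ with characteristic polynomial $f(x)$, so ``every ideal class'' translates precisely to ``every similarity class''. Everything else is a direct citation of the preceding results.
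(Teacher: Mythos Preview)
Your proposal is correct and follows essentially the same approach as the paper: apply Proposition~\ref{prop:deg one - kappa ideal matrix} to obtain $a,z$ and the ideal matrix $\kappa(a,z)$, invoke Lemma~\ref{lem:class contains ktk^-1} to get $\kappa T(\theta)\kappa^{-1}$ in the similarity class, conjugate via Lemma~\ref{lem:C_f(a,z)} to reach $C_f(a,z)$, and then use Corollary~\ref{cor:every ideal class contains deg one} for the maximal-order case. The only cosmetic difference is that the paper first phrases Lemma~\ref{lem:class contains ktk^-1} for a generic ideal matrix $\alpha$ before specialising to $\kappa$, whereas you go directly to $\kappa$.
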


\begin{proof}
Let $\cC$ be a similarity class in $\M_{n}(A)$ corresponding to
the ideal class of $\mfb$. By Lemma~\ref{lem:class contains ktk^-1}
$\cC$ contains a matrix $\alpha T(\theta)\alpha^{-1}$, where $\alpha$
is an ideal matrix for $\mfb$. By Proposition~\ref{prop:deg one - kappa ideal matrix},
there exists a $z\in A$ such that $f(z)\equiv0\mod(a)$, where $a$
is a generator of $A\cap\mfb$ and $\kappa=\kappa(a,z)$ is an ideal
matrix for $\mfb$. Thus the similarity class of $\cC$ contains $\kappa T(\theta)\kappa^{-1}$,
so by Lemma~\ref{lem:C_f(a,z)} $\cC$ contains $C_{f}(a,z)$.

If $A[\theta]=B$, then by Corollary~\ref{cor:every ideal class contains deg one},
every ideal class contains an ideal of degree one, so the argument
above implies that every similarity class contains a matrix of the
desired form.
\end{proof}

\section{Examples} 
\begin{ex} 
    Let $A=\Z, K=\mathrm{Frac}(A)=\Q$, $L=\Q(\theta)$ be the number field with defining polynomial 
\[f(x)=x^3+4x-1\]
(LMFDB \cite{lmfdb} Number Field \href{https://www.lmfdb.org/NumberField/3.1.283.1}{\textcolor{blue}{3.1.283.1}}), and $B=\mathcal{O}_{L}=\Z[\theta]$ its ring of integers. $\Z[\theta]$ has ideal class group of order 2, generated by the ideal class of $\mathfrak{b}=(3,\theta-2)$. Let $T(\theta)$ be the companion matrix of $f(x)$, and $v$ be as in the proof of Lemma~\ref{lem:C_f(a,z)}:
\[T(\theta)=\begin{pmatrix}
	0 & 1 & 0\\ 
	0 & 0 & 1\\
	1 & -4 & 0\end{pmatrix}, \quad v=\begin{pmatrix}
	0 & 1 & 0\\
	0 & 0 & 1\\
	1 & 0 & 0 
\end{pmatrix}.\]
\begin{enumerate}
	\item For $[\mathfrak{b}]=[(3, \theta-2)]$, we may take $a=3, z=2$ to define $\kappa(3,2)$ (as defined in (\ref{eq:kappa})) as an ideal matrix for $\mathfrak{b}$, and $\tau_1$ as in the proof of Lemma~\ref{lem:C_f(a,z)}:
	\[\kappa_1=\kappa(3,2)=\begin{pmatrix} 
	3 & 0 & 0 \\
	-2 & 1 & 0\\
	-4 & 0 & 1
\end{pmatrix}, \quad \tau_1=\begin{pmatrix}
	1 & 0 & 0\\
	0 & 1 & 0\\
	0 & -2 & 1
\end{pmatrix}.\] 
By Lemma~\ref{lem:class contains ktk^-1}, the similarity class of $\M_n(\Z)$ corresponding to $\mathfrak{b}$ under the Latimer--MacDuffee correspondence contains $\kappa_1 T(\theta) \kappa_1^{-1}$, and a representative for this similarity class of the form $C_{f}(a,z)$ is given by:
\[v\tau_1(\kappa_1 T(\theta)\kappa_1^{-1})\tau_1^{-1}v^{-1}=\begin{pmatrix}
	0 & 1 & 0 \\
	-8 & -2 & -5\\
	3 & 0 & 2
\end{pmatrix}=C_{f}(3,2)\]
 
	\item For $[1]=[(1, \theta)]$, we may take $a=1, z=0$, so $\kappa_2=\tau_2=I$. The similarity class of $\M_n(\Z)$ corresponding to the trivial ideal class contains $T(\theta)$, and a representative for this similarity class of the form $C_f(a,z)$ is given by:
 \[v\tau_2(\kappa_2 T(\theta)\kappa_2^{-1})\tau_2^{-1}v^{-1}=vT(\theta)v^{-1}=\begin{pmatrix}
     0 & 1 & 0\\
     -4 & 0 & 1\\
     1 & 0 & 0
 \end{pmatrix}=C_f(1,0)\]
\end{enumerate}
\end{ex}

\begin{ex}
    Let $A=\F_2[x], K=\F_2(x)$, $L=\F_2(x,y)/(f(x,y))$ and $B=\F_2[x,y]/(f(x,y))$ be the (affine) coordinate ring of the curve 
    \[f(x,y)=y^3+x^3+x^2+x\] 
    over $\F_2$. Note that $f(x,y)$ is smooth, so $B$ is a Dedekind domain. 
    $B$ has ideal class group of order 3, generated by the ideal class of $\mathfrak{b}=(x,y)$. Let $T(y)$ be the companion matrix of $f(x,y)$, and $v$ be as in the proof of Lemma~\ref{lem:C_f(a,z)}:
\[T(y)=\begin{pmatrix}
	0 & 1 & 0\\
	0 & 0 & 1\\
	x^3+x^2+x & 0 & 0 
\end{pmatrix}, \quad v=\begin{pmatrix}
	0 & 1 & 0\\
	0 & 0 & 1\\
	1 & 0 & 0 
\end{pmatrix}.\]
\begin{enumerate}
	\item For $[\mathfrak{b}]=[(x,y)]$, we may take $a=x, z=0$ to define $\kappa(x,0)$ as an ideal matrix for $\mathfrak{b}$, and define $\tau_1$ as in the proof of Lemma~\ref{lem:C_f(a,z)}:
	\[\kappa_1=\kappa(x,0) =\begin{pmatrix}
		x & 0 & 0 \\
		0 & 1 & 0\\
		0 & 0 & 1
	\end{pmatrix}, \quad \tau_1=\begin{pmatrix}
		1 & 0 & 0\\
		0 & 1 & 0\\
		0 & 0 & 1
	\end{pmatrix}.
	\]
	The similarity class of $\M_n(\F_2[x])$ corresponding to $\mfp$ under the Latimer--MacDuffee correspondence contains $\kappa_1 T(y) \kappa_1^{-1}$, and a representative for this similarity class of the form $C_{f}(a,z)$ is given by:
	\[v\tau_1(\kappa_1 T(y) \kappa_1^{-1})\tau_1^{-1} v^{-1}=\begin{pmatrix}
	0 & 1 & 0 \\
	0 & 0 & x^2+x+1\\
	x & 0 & 0 
\end{pmatrix}
=C_f(x,0)\]
\item For $[\mathfrak{b}^2]=[(x+1,y+1)]$, we may take $a=x+1, z=1$ to define $\kappa(x+1,1)$ and $\tau_2$:
\[\kappa_2=\kappa(x-1, 1) =\begin{pmatrix}
	x+1 & 0 & 0 \\
	1 & 1 & 0\\
	1 & 0 & 1
\end{pmatrix}, \quad \tau_2=\begin{pmatrix}
	1 & 0 & 0\\
	0 & 1 & 0\\
	0 & 1 & 1
\end{pmatrix}.
\]
A representative for the similarity class of $\M_n(\F_2[x])$ containing $\kappa_2 T(y)\kappa_2^{-1}$ of the form $C_{f}(a,z)$ is given by:
\[v\tau_2(\kappa_2 T(y) \kappa_2^{-1})\tau_2^{-1} v^{-1}=\begin{pmatrix}
	0 & 1 & 0\\
	1 & 1 & x^2+1\\
	x+1 & 0 & 1
\end{pmatrix}
=C_f(x+1,1)
\]
\item For $[\mathfrak{b}^3]=[1]=[(1,y)]$, we may take $a=1,z=0$ so $\kappa_3=\kappa(1,0)=I$ and $\tau_2=I$. The similarity class of $\M_n(\F_2[x])$ corresponding to the trivial ideal class contains $T(y)$, and a representative for this similarity class of the form $C_{f}(a,z)$ is given by:
\[v\tau_3(\kappa_3 T(y)\kappa_3^{-1})\tau_3^{-1} v^{-1}=v T(y)v^{-1}=\begin{pmatrix}
0 & 1 & 0\\
0 & 0 & x^3+x^2+x\\
1 & 0 & 0
\end{pmatrix}=C_f(1,0).
\]
\end{enumerate}
\end{ex}

\begin{acknowledgement*}
We are deeply grateful to H.~W.~Lenstra for providing the proof
of Theorem~\ref{thm:Lenstra} and allowing us to include
it here.
\end{acknowledgement*}
\bibliographystyle{alex}
\bibliography{alex}

\end{document}